\title{Fractional Adams-Moser-Trudinger type inequalities}
\author{Luca Martinazzi\thanks{supported by Swiss National Science Foundation, project nr. PP00P2-144669.}\\ \small{University of Basel}\\
\small \texttt{luca.martinazzi@unibas.ch}}
\newtheorem{trm}{Theorem}
\newtheorem{prop}[trm]{Proposition}
\newtheorem{lemma}[trm]{Lemma}
\newtheorem{rmk}[trm]{Remark}
\newcommand{\R}{\mathbb{R}}
\newcommand{\de}{\partial}
\newcommand{\ve}{\varepsilon}
\newcommand{\M}[1]{\mathcal{#1}}
\newenvironment{proof}{\noindent\emph{Proof.}}{\phantom{ } \hfill$\square$\medskip}
\newcommand{\vrac}[1]{\| #1 \|}
\newcommand{\eps}{\varepsilon}
\DeclareMathOperator{\loc}{loc}
\DeclareMathOperator*{\dist}{dist}
\begin{document}

\maketitle

\begin{abstract} Extending several works, we prove a general Adams-Moser-Trudinger type inequality for the embedding of Bessel-potential spaces $\tilde H^{\frac{n}{p},p}(\Omega)$ into Orlicz spaces for an arbitrary domain $\Omega$ with finite measure. In particular we prove
$$ \sup_{u\in \tilde H^{\frac{n}{p},p}(\Omega), \;\|(-\Delta)^{\frac{n}{2p}}u\|_{L^{p}(\Omega)}\leq 1}\int_{\Omega}e^{\alpha_{n,p} |u|^\frac{p}{p-1}}dx \leq c_{n,p}|\Omega|,
$$
for a positive constant $\alpha_{n,p}$ whose sharpness we also prove. We further extend this result to the case of Lorentz-spaces (i.e. $(-\Delta)^\frac{n}{2p}u\in L^{(p,q)})$. The proofs are simple, as they use Green functions for fractional Laplace operators and suitable cut-off procedures to reduce the fractional results to the sharp estimate on the Riesz potential proven by Adams and its generalization proven by Xiao and Zhai.

We also discuss an application to the problem of prescribing the $Q$-curvature and some open problems.
\end{abstract}

\section{Introduction}

Let $\Omega\subset\R^n$ be an open domain with finite measure $|\Omega|$. It is well known that for a positive integer $k<n$ and for $1\le p<\frac{n}{k}$ the Sobolev space $W^{k,p}_0(\Omega)$ embeds continuously into $L^{\frac{np}{n-kp}}(\Omega)$, while in the borderline case $p=\frac{n}{k}$ one has $W^{k,\frac{n}{k}}_0(\Omega)\not\subset L^\infty(\Omega)$, unless $k=n$. On the other hand, as shown by Yudovich \cite{yud}, Pohozaev \cite{poh}, Trudinger \cite{tru} and others, for the case $k=1$ one has
$$W^{1,n}_0(\Omega)\subset \left\{u\in L^1(\Omega):E_\beta(u):=\int_\Omega e^{\beta |u|^{\frac{n}{n-1}}}dx<\infty  \right\},\quad \text{for any }\beta <\infty,$$
and the functional $E_\beta$ is continuous on $W^{1,n}_0(\Omega)$.
This embedding was complemented with a sharp inequality by Moser \cite{mos}, the so-called Moser-Trudinger inequality: 
\begin{equation}\label{stimaMT0}
\sup_{u\in W^{1,n}_0(\Omega),\;\|\nabla u\|_{L^n(\Omega)}\le 1} \int_\Omega e^{\alpha_n |u|^\frac{n}{n-1}}dx \le C|\Omega|,\quad \alpha_n:=n\omega_{n-1}^\frac{1}{n-1},
\end{equation}
where $\omega_{n-1}$ is the volume of the unit sphere in $\R^n$. The constant $\alpha_n$ is sharp in the sense that for $\alpha>\alpha_n$ the supremum in \eqref{stimaMT0} is infinite.


An extension of Moser's result to the case $k>1$ was given by Adams \cite{ada}  who proved that
\begin{equation}\label{stimaMT0b}
\sup_{u\in C^k(\R^{n}),\;\mathrm{supp}(u)\subset\bar\Omega,\;\|\Delta^\frac{k}{2} u\|_{L^\frac{n}{k}(\Omega)}\le 1} \int_\Omega e^{\alpha |u|^\frac{n}{n-k}}dx \le C|\Omega|,
\end{equation}
for an optimal constant $\alpha=\alpha(k,n)$. Here $k\in (0,n)\cap\mathbb{N}$ and $\Delta^\frac{k}{2} u:=\nabla\Delta^\frac{k-1}{2}u$ when $k$ is odd. 

In this paper we study the fractional case of Adams' inequality, i.e. we allow $k\in (0,n)$ to be non-integer. 
Let us consider the space
\begin{equation*}
L_{s}(\R^n)=\left\{ u\in L^{1}_{\loc}(\R^n): \int_{\R^n}\frac{|u(x)|}{1+|x|^{n+s}}dx <\infty \right\}.
\end{equation*}
For functions $u\in L_{s}(\R^n)$ the fractional Laplacian  $(-\Delta)^\frac{s}{2}u$ can be defined as follows. First set 
$$(-\Delta)^\frac{s}{2}\varphi:= \mathcal{F}^{-1}(|\xi|^{s}\mathcal{F}{\varphi}(\xi))$$
for $\varphi$ belonging to the Schwartz space $\mathcal{S}(\R^n)$ of rapidly decreasing functions, where $\mathcal{F}$ denotes the unitary Fourier transform. Then for $u\in L_{s}(\mathbb{R})$ we define $(-\Delta)^\frac{s}{2}u$ as a tempered distributions via the formula
\begin{equation}\label{deffraclap}
\langle (-\Delta)^\frac{s}{2}u,\varphi \rangle = \langle u,(-\Delta)^\frac{s}{2}\varphi \rangle:=\int_{\mathbb{R}} u (-\Delta)^\frac{s}{2}\varphi dx, \quad \varphi\in\mathcal{S}(\R^n),
\end{equation} 
the right-hand side being well-defined because
$$|(-\Delta)^\frac{s}{2}\varphi(x)|\le \frac{C_\varphi}{1+|x|^{n+s}},\quad \text{for every }\varphi \in \mathcal{S}(\R^n).$$
see e.g. Proposition 2.1 in \cite{hyd2}.

For a set  $\Omega\subset  \R^{n}$ (possibly unbounded), $s \ge 0$ and $p\in (1,\infty)$ we define 
\begin{equation}\label{defHsp}
\begin{split}
H^{s,p}(\R^n)&:=\{u\in L^p(\R^n): (-\Delta)^{\frac s2} u\in L^p(\R^n)\}\\
\tilde H^{s,p}(\Omega)&:=\{u\in H^{s,p}(\R^n): u\equiv 0\text{ in }\R^n\setminus \Omega\}.
\end{split}
\end{equation}
Then we have:

\begin{trm}\label{MT2} For any $p\in (1,\infty)$ and positive integer $n$ set
\begin{equation}\label{defalpha}
K_{n,s}:=\frac{\Gamma((n-s)/2)}{\Gamma(s/2) 2^{s}\pi^{n/2}},\quad \alpha_{n,p}:=\frac{n}{\omega_{n-1}} K_{n,\frac{n}{p}}^{-p'},\quad p':=\frac{p}{p-1}.
\end{equation}
Then for any open set $\Omega\subset \R^{n}$ with finite measure we have
\begin{equation}\label{stimaMT2}
\sup_{u\in \tilde H^{\frac{n}{p},p}(\Omega), \;\|(-\Delta)^{\frac{n}{2p}}u\|_{L^{p}(\Omega)}\leq 1}\int_{\Omega} e^{\alpha_{n,p} |u|^{p'}}dx \leq c_{n,p}|\Omega|.
\end{equation}
Moreover the constant $\alpha_{n,p}$ is sharp in the sense that
we cannot replace it with any larger one without making the supremum in \eqref{stimaMT2} infinite.
\end{trm}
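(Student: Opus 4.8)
The plan is to deduce \eqref{stimaMT2} from the sharp inequality of Adams \cite{ada} for Riesz potentials. Set $s:=n/p\in(0,n)$, so that $n-s=n/p'$ with $p'>1$ and $K_{n,s}|x|^{s-n}$ is locally integrable, and let $I_s$ denote the Riesz potential of order $s$, i.e. convolution with $K_{n,s}|x|^{s-n}$. Adams' theorem, in the form we need, states: if $g\in L^p(\R^n)$ vanishes outside $\Omega$ and $\|g\|_{L^p(\R^n)}\le1$, then, setting $v(x):=\int_\Omega\frac{|g(y)|}{|x-y|^{n/p'}}\,dy$ (so $I_s(|g|)=K_{n,s}v$), one has $\int_\Omega e^{\frac{n}{\omega_{n-1}}v^{p'}}\,dx\le c_{n,p}|\Omega|$, with the constant $\frac{n}{\omega_{n-1}}$ optimal. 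The normalization \eqref{defalpha} of $\alpha_{n,p}$ is chosen precisely so that $\alpha_{n,p}|w|^{p'}\le\frac{n}{\omega_{n-1}}v^{p'}$ as soon as $|w|\le I_s(|g|)$. Consequently the upper bound in \eqref{stimaMT2} follows once we establish, for $u$ in the admissible class and $f:=(-\Delta)^{\frac{n}{2p}}u$ (so $\|f\|_{L^p(\Omega)}\le1$), the pointwise estimate $|u(x)|\le I_s(|f|\chi_\Omega)(x)$ for a.e. $x\in\Omega$.

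To prove this pointwise estimate I would argue as follows. By a standard approximation argument one reduces to $u$ smooth. On $\R^n$ one has $u=I_s f$ (any additive polynomial vanishes because $u$ is compactly supported and $f$ decays like $|x|^{-n-s}$). The key point is that, although for non-integer $\tfrac{n}{2p}$ the function $f$ does \emph{not} vanish outside $\Omega$, the function $u$ is the solution of the exterior Dirichlet problem $(-\Delta)^{\frac{n}{2p}}u=f$ in $\Omega$, $u\equiv0$ in $\R^n\setminus\Omega$; in the range $\tfrac{n}{2p}<1$ it is therefore represented through the Green function $G_\Omega$ of this problem, $u(x)=\int_\Omega G_\Omega(x,y)f(y)\,dy$, so that only $f|_\Omega$ enters. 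One then invokes the comparison $0\le G_\Omega(x,y)\le K_{n,s}|x-y|^{s-n}$ — the difference between the fundamental solution of $(-\Delta)^{\frac{n}{2p}}$ on $\R^n$ and $G_\Omega$ is $\tfrac{n}{2p}$-harmonic in $\Omega$ with nonnegative exterior data, hence nonnegative — to conclude $|u(x)|\le\int_\Omega G_\Omega(x,y)|f(y)|\,dy\le I_s(|f|\chi_\Omega)(x)$. Adams' inequality then closes the argument, and the Lorentz refinement announced in the abstract follows identically from the Xiao--Zhai inequality.

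I expect the main obstacle to be the higher-order range $\tfrac{n}{2p}\ge1$ (possible when $n\ge3$ and $1<p\le n/2$): there the Dirichlet problem is of higher order and its Green function is not positive on an arbitrary domain, so the clean domination by the Riesz kernel is not available off the shelf. This is presumably where the ``suitable cut-off procedures'' of the abstract enter. One decomposes $f=f\chi_\Omega+f\chi_{\R^n\setminus\Omega}$, estimates $I_s(f\chi_\Omega)$ directly by Adams, and controls $I_s(f\chi_{\R^n\setminus\Omega})$ — which is $\tfrac{n}{2p}$-harmonic in $\Omega$ and equals $-I_s(f\chi_\Omega)$ on $\R^n\setminus\Omega$ — by comparison and regularity estimates, perhaps after factoring $(-\Delta)^{\frac{n}{2p}}=(-\Delta)^m(-\Delta)^\sigma$ with $m\in\mathbb N$, $\sigma\in(0,1)$, and iterating. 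Making such estimates uniform over arbitrary $\Omega$ is the technical heart of the proof.

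For the sharpness of $\alpha_{n,p}$ I would adapt Adams' extremal construction \cite{ada}. Since the supremum in \eqref{stimaMT2} is invariant under translations and dilations, assume $B_R(0)\subset\Omega$ and build a fractional Moser-type sequence $u_k\in\tilde H^{\frac{n}{p},p}(\Omega)$ concentrating at the origin — modelled on $I_s$ of a normalized truncation of $|x|^{-n/p}$ — with $\|(-\Delta)^{\frac{n}{2p}}u_k\|_{L^p(\Omega)}\le1+o(1)$ and $\alpha_{n,p}|u_k|^{p'}\ge(n-o(1))\log\frac1{r_k}$ on a small ball $B_{r_k}$. Then for any $\alpha>\alpha_{n,p}$ one gets $\int_\Omega e^{\alpha|u_k|^{p'}}\,dx\ge|B_{r_k}|\,r_k^{-(n-o(1))\alpha/\alpha_{n,p}}\to\infty$, even after dividing $u_k$ by $1+o(1)$. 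Here the delicate point is once again nonlocality: since $(-\Delta)^{\frac{n}{2p}}u_k$ cannot be supported in $\Omega$, one must truncate and correct $u_k$ outside a fixed ball so that the resulting extra contribution to $\|(-\Delta)^{\frac{n}{2p}}u_k\|_{L^p(\Omega)}$ is genuinely $o(1)$, which dictates how fast $r_k$ may tend to $0$.
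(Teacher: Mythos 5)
Your outline matches the paper's skeleton: reduce to Adams' sharp Riesz-potential inequality by establishing the pointwise bound $|u(x)|\le K_{n,\frac{n}{p}}\int_\Omega|x-y|^{\frac{n}{p}-n}|f(y)|\,dy$ with $f=(-\Delta)^{\frac{n}{2p}}u|_\Omega$, and prove sharpness via a cut-off construction. For the range $\frac{n}{p}\le2$ you correctly identify the mechanism (Green function for the fractional Dirichlet problem, maximum-principle comparison with the free-space fundamental solution), and this is what the paper does.

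Where the proposal has a genuine gap is the higher-order case $\frac{n}{p}>2$, which you rightly flag as the technical heart but for which you offer a route that does not work as stated. You propose splitting $f=f\chi_\Omega+f\chi_{\R^n\setminus\Omega}$ and controlling $I_s(f\chi_{\R^n\setminus\Omega})$ ``by comparison and regularity estimates,'' but the hypothesis only controls $\|f\|_{L^p(\Omega)}$; there is no a priori size control on $f|_{\R^n\setminus\Omega}$, and the assertion that $u=I_sf$ globally already requires integrability/decay you do not have for a general $u\in\tilde H^{\frac{n}{p},p}(\Omega)$ on a rough, possibly unbounded $\Omega$. The paper instead side-steps both issues: writing $s=2k+\sigma$, $\sigma\in(0,2]$, it defines a \emph{Navier}-type Green function $G_s$ as an iterated convolution $G_2\circ\cdots\circ G_2\circ G_\sigma$ of lower-order Green functions (Proposition \ref{propgreen2}; see Remark \ref{navier}). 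Because each factor satisfies $0\le G\le F$ by the scalar maximum principle, the composite automatically satisfies $0\le G_s\le F_2\ast\cdots\ast F_2\ast F_\sigma=F_s$, and the representation $u=\int_\Omega G_s(\cdot,y)(-\Delta)^{\frac{s}{2}}u(y)\,dy$ holds for all $u\in\tilde H^{s,p}(\Omega)$ by Netrusov density. Your worry that ``the higher-order Green function is not positive on an arbitrary domain'' is correct for the Dirichlet problem, but the Navier Green function is a product of positive kernels, and the Navier boundary conditions are automatically satisfied by limits of $C^\infty_c(\Omega)$ functions — this is the missing idea.

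On sharpness, your plan (adapt Adams' extremizing sequence, truncate, show the correction to the norm is $o(1)$) is plausible but considerably harder to carry out than what the paper does, because estimating $\|(-\Delta)^{\frac{n}{2p}}(\theta\,I_{\frac{n}{p}}f_k)\|_{L^p}$ for a concentrating sequence $f_k$ requires quantitative control of the commutator with the cut-off as the concentration scale shrinks. The paper avoids this by arguing \emph{by contradiction}: assuming \eqref{stimaMT2} held for some $\beta>\alpha_{n,p}$, it takes an \emph{arbitrary} $f\in C^\infty_c(B_\rho)$ with $\|f\|_{L^p}\le1$, sets $\tilde u=K_{n,\frac{n}{p}}\theta\,I_{\frac{n}{p}}f$, and uses the disjoint-support estimate (Proposition \ref{propdisj}) to show $\|(-\Delta)^{\frac{n}{2p}}\tilde u\|_{L^p}\le1+\bar C\rho^{n/p'}$, uniformly in $f$. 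Choosing $\rho$ small then contradicts the sharpness assertion in Adams' Theorem \ref{trmadams}. This transfers sharpness directly from the known result rather than reconstructing an extremal family, and is the reason the paper only proves sharpness and not the stronger statement \eqref{stimaMT2c}, which (as the paper remarks) remains open for $\frac{n}{p}>1$.
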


\begin{rmk} The norm $\|(-\Delta)^{\frac{n}{2p}}u\|_{L^{p}(\Omega)}$ is equivalent to $\|(-\Delta)^{\frac{n}{2p}}u\|_{L^{p}(\R^{n})}$ for functions in $\tilde H^{\frac{n}{p},p}(\Omega)$,
see for instance Theorem 7.1 in \cite{Gr0}.
\end{rmk}

To explain the idea of the proof let us recall that Adams' result \eqref{stimaMT0b} follows at once from the following result, which is Theorem 2 in \cite{ada}:

\begin{trm}[Adams]\label{trmadams}
Let $\Omega \subset\R^{n}$ be an open set with finite measure $|\Omega|$, and fix $p\in (1,\infty)$. For $\alpha\in (0,n)$ and $f\in L^p(\Omega)$ consider the Riesz potential $I_{\alpha}f$ defined as
$$I_\alpha f(x)=\int_\Omega \frac{f(y)}{|x-y|^{n-\alpha}}dy.$$
Then
$$\sup_{f\in L^p(\Omega),\;\|f\|_{L^p(\Omega)}\le 1} \int_{\Omega}e^{\frac n{\omega_{n-1}} |I_{\frac np}f|^{p'}}dx\le c_{n,p}|\Omega|,\quad p'=\frac{p}{p-1}.$$
The constant $\frac{n}{\omega_{n-1}}$ is sharp in the sense that
$$\sup_{f\in C^\infty_0(B_\delta),\;\|f\|_{L^p(B_\delta)}\le 1} \int_{B_\delta}e^{\gamma |I_{\frac np}f|^{p'}}dx=\infty \quad \text{for every }\delta>0,\;\gamma>\frac{n}{\omega_{n-1}}. $$
\end{trm}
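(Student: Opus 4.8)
\textbf{Proof strategy for Theorem \ref{trmadams} (Adams' inequality for Riesz potentials).}

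The plan is to prove the two assertions separately: the uniform bound on the exponential integral, and the sharpness of the constant $\frac{n}{\omega_{n-1}}$.

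For the uniform bound, the key is a rearrangement reduction to a one-dimensional convolution inequality, exactly as in O'Neil's lemma and Adams' original argument. First I would reduce to $f \ge 0$ supported on $\Omega$ with $\|f\|_{L^p(\Omega)} \le 1$, and note that by Hardy--Littlewood and the radial symmetrization, replacing $\Omega$ by a ball $\Omega^* = B_R$ with $|B_R| = |\Omega|$ and $f$ by its decreasing rearrangement $f^*$ only increases $I_{n/p}f$ in the rearranged sense: more precisely one shows $(I_{n/p}f)^*(x) \le C \int_{|y|\le R}\frac{f^*(y)}{|x-y|^{n(1-1/p)}}dy$ pointwise, using the layer-cake formula together with the fact that the Riesz kernel $|x|^{\alpha - n}$ is itself radially decreasing. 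Passing to the variable $t = \omega_{n-1}|x|^n/n$ (so that $t$ measures the volume of the ball of radius $|x|$) and $\phi(t) = f^*(x)\cdot(\text{Jacobian factor})$, the potential becomes a Riesz-type convolution on $(0,\infty)$:
\begin{equation*}
g(t) := (I_{n/p}f)^*\big(x(t)\big) \le \int_0^\infty k(t,\tau)\phi(\tau)\,d\tau,
\end{equation*}
where the kernel satisfies $k(t,\tau) \le \left(\frac{n}{\omega_{n-1}}\right)^{1/p'}\left(\min(t,\tau)/\tau\right)^{?}$ with leading behaviour, for $\tau \le t$, bounded by $\left(\frac{n}{\omega_{n-1}}\max(t,\tau)\right)^{-1/p'}$ up to a multiplicative constant tending to $1$. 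Writing everything in exponential variables $t = e^{-s}$, $\tau = e^{-y}$ turns this into $g \le (\text{const})\cdot(a \ast b)$ with $\|b\|_{L^{p'}} $ controlled, and the exponential integrability
\begin{equation*}
\int_0^\infty \exp\!\left(\frac{n}{\omega_{n-1}}|g(t)|^{p'} - t\right)dt \le C|\Omega|
\end{equation*}
follows from the Adams--Garsia type lemma: if $a(s) = \mathbf{1}_{\{s\ge 0\}}$ and $\|b\|_{L^{p'}(0,\infty)} \le 1$ then $\int_0^\infty e^{|(a\ast b)(s)|^{p'} - s}ds$ is bounded by an absolute constant. I would either cite this functional-analytic lemma (O'Neil \cite{oneil}, Adams \cite{ada}) or include its short proof via Hölder on the tail and a case distinction on whether $\int_0^s b^{p'}$ is close to $1$. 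Undoing the change of variables gives exactly $\int_{\Omega^*}e^{\frac{n}{\omega_{n-1}}|I_{n/p}f|^{p'}}dx \le c_{n,p}|\Omega|$, and by the rearrangement inequality the same holds with $\Omega$ in place of $\Omega^*$.

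For the sharpness, I would test against a family of truncated fundamental-solution-type functions. Fix $\delta > 0$ and for $\ve > 0$ small let
\begin{equation*}
f_\ve(y) = c_\ve \, |y|^{-n/p'}\,\mathbf{1}_{\{\ve \le |y| \le \delta\}},
\end{equation*}
normalized so that $\|f_\ve\|_{L^p(B_\delta)} = 1$; a direct computation gives $c_\ve^p \sim \big(\omega_{n-1}\log(\delta/\ve)\big)^{-1}$. One then estimates $I_{n/p}f_\ve(x)$ from below for $|x| \le \ve$: there $|x-y| \le 2|y|$ on the support, so
\begin{equation*}
I_{n/p}f_\ve(x) \ge c_\ve \int_{\ve\le|y|\le\delta} \frac{|y|^{-n/p'}}{(2|y|)^{n/p'}}\,dy = 2^{-n/p'}c_\ve\,\omega_{n-1}\log(\delta/\ve),
\end{equation*}
so that $|I_{n/p}f_\ve(x)|^{p'} \gtrsim \big(\omega_{n-1}/n\big)\cdot(\text{const})\cdot\log(\delta/\ve)$ on $B_\ve$ — a more careful bookkeeping (mollifying $f_\ve$ to land in $C_0^\infty$ and using $|x-y| \le |y| + o(|y|)$) shows the sharp constant: $|I_{n/p}f_\ve|^{p'} \ge \frac{\omega_{n-1}}{n}\log(\delta/\ve) - C$ uniformly on $B_\ve$. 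Hence for $\gamma > \frac{n}{\omega_{n-1}}$,
\begin{equation*}
\int_{B_\delta} e^{\gamma|I_{n/p}f_\ve|^{p'}}dx \ge |B_\ve| \cdot e^{\gamma(\frac{\omega_{n-1}}{n}\log(\delta/\ve) - C)} \sim (\text{const})\,\ve^n\,(\delta/\ve)^{\gamma\omega_{n-1}/n} \to \infty
\end{equation*}
as $\ve \to 0$, since the exponent of $1/\ve$ is $\gamma\omega_{n-1}/n - n > 0$ precisely when $\gamma > n/\omega_{n-1}$. A standard mollification then upgrades $f_\ve$ to a $C_0^\infty(B_\delta)$ competitor without destroying these estimates.

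\textbf{Main obstacle.} The delicate point is the rearrangement step: one must show that the exponential integral of $I_{n/p}f$ is controlled by that of the one-dimensional convolution with the \emph{sharp} leading constant $\frac{n}{\omega_{n-1}}$, tracking how the Jacobian factors combine with the kernel so that no constant larger than $1$ is lost in front of $(a\ast b)$. This is exactly the content of O'Neil's inequality for the Riesz kernel, and the honest way forward is to invoke it (or Adams' Lemma 1 and the subsequent one-dimensional lemma). The rest — the Garsia-type exponential estimate and the sharpness test functions — is comparatively routine bookkeeping.
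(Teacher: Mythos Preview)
The paper does not prove this theorem at all: Theorem~\ref{trmadams} is quoted from Adams' original paper \cite{ada} (as ``Theorem 2 in \cite{ada}'') and is used purely as an input to prove Theorems~\ref{MT2} and~\ref{MT3} via the Green-function representation \eqref{greenform}. So there is no ``paper's own proof'' to compare against; your sketch is essentially a recollection of Adams' classical argument (O'Neil rearrangement inequality followed by the one-dimensional exponential lemma), which is the right thing to cite here.

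That said, there is a concrete slip in your sharpness part: the test function should be
\[
f_\ve(y) = c_\ve\, |y|^{-n/p}\,\mathbf{1}_{\{\ve \le |y| \le \delta\}},
\]
with exponent $-n/p$, not $-n/p'$. With $-n/p'$ the $L^p$ normalisation does \emph{not} produce $c_\ve^p \sim (\omega_{n-1}\log(\delta/\ve))^{-1}$ unless $p=2$, and the potential computation $\int |y|^{-n/p'}|x-y|^{-n/p'}\,dy$ does not yield the logarithm either. With the correct exponent $-n/p$ one gets $|f_\ve|^p|y|^{n-1}\sim |y|^{-1}$ (hence the log in the normalisation) and, for $|x|$ small, $|y|^{-n/p}|x-y|^{-n/p'}\sim |y|^{-n}$ (hence the log in the potential), leading to $|I_{n/p}f_\ve|^{p'}\sim \omega_{n-1}\log(\delta/\ve)$ on $B_\ve$ and the blow-up exactly when $\gamma>n/\omega_{n-1}$. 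Your subsequent bookkeeping (the $|B_\ve|\cdot(\delta/\ve)^{\gamma\omega_{n-1}/n}$ balance) is then correct once this exponent is fixed. Also, your stated pointwise inequality $(I_{n/p}f)^*(x)\le C\int_{|y|\le R}\dots$ on $\R^n$ is not how the rearrangement step actually works; O'Neil's inequality is an estimate for the decreasing rearrangement $(I_\alpha f)^*(t)$ on $(0,\infty)$ in terms of $f^*$, and one passes directly to the one-variable setting rather than through a pointwise bound on $\R^n$. You seem to be aware of this in the next sentence, but the displayed inequality as written is not correct.
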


Adams applies this result to the function $f= (-\Delta)^\frac{n}{2p}u$ where $u$ is smooth and supported in $\bar \Omega$, and $p=\frac{n}{k}$ (compare to \eqref{stimaMT0b}). Here it is crucial that when $\frac{n}{p}\in \mathbb{N}$, then the support of $f$ (with Adams' convention that $(-\Delta)^{\frac {n}{2p}}=\nabla(-\Delta)^\frac{n/p-1}{2}$ for $\frac{n}{p}$ odd, up to a sign) does not exceed the support of $u$, so that Theorem \ref{trmadams} can be applied. This is not the case when $\frac{n}{p}\not \in \mathbb{N}$. Indeed for general $s>0$ the support of $(-\Delta)^\frac{s}{2} u$ can be the whole $\R^n$ even if $u$ is compactly supported.

In order to circumvent this issue, instead of using the Riesz potential we will write $u$ in terms of a Green representation formula (Proposition \ref{propgreen2} below)
\begin{equation}\label{greenform}
u(x)=\int_{\Omega} G_\frac{n}{p}(x,y)(-\Delta)^\frac{n}{2p} u(y)dy,
\end{equation}
which holds for a suitable Green function which we construct using variational methods, and which we can sharply bound in terms of the fundamental solution of $(-\Delta)^\frac{n}{2p}$ in $\R^n$ (see estimate \eqref{G<Fbis} in particular). The Green formula \eqref{greenform} will be first proven for functions in $C^\infty_c(\Omega)$, and then extended to all functions in $\tilde H^{\frac np,p}(\Omega)$ thanks to a density theorem of Yu. V. Netrusov. Since $\Omega$ is not necessarily bounded and might have rough boundary, we must be careful, particularly in using maximum principles (we will use a simple ``variational'' maximum principle instead of the one of Silvestre \cite{Sil}). We remark that estimates for the Green function of $(-\Delta)^\frac{s}{2}$ on bounded domains with $C^{1,1}$ boundary were proven by Chen and Song \cite{CS} and other authors (see e.g \cite{Aba}) when $s<2$. This is of course insufficient for our purposes. 
Our strategy here is to first prove the precise estimate for $G_\sigma$ when $\sigma\in (0,2]$ (only assuming $|\Omega|<\infty$), and then, following a suggestion of A. Maalaoui, write $G_s$ as convolution of $k$ copies of $G_2$ and one copy of $G_\sigma$ for $s=2k+\sigma$.

The sharpness of the inequality \eqref{stimaMT2}, i.e. of the constant $\alpha_{n,p}$ will be instead obtained by constructing suitable test functions, with a method of cut-off suggested by A. Schikorra, and using a disjoint-support estimate (Proposition \ref{propdisj} below) which extends analogous estimates from \cite{MMS}.

\medskip

Let us mention some previous partial results. Extending an early result of Strichartz \cite{str}, Ozawa \cite{oza} proved a subcritical version of Theorem \ref{MT2}, i.e. \eqref{stimaMT2} for some $\alpha<\alpha_{n,p}$ under some regularity assumptions on $\Omega$ (for instance $\Omega$ bounded and with regular boundary, or with the extension property). Lam and Lu \cite{LaL} proved that for $\Omega=\R^n$ the integral in \eqref{stimaMT2} is uniformly bounded for $u$ such that $\|(\tau I-\Delta)^\frac{n}{2p}u\|_{L^p(\R^n)}\le 1$ (here $\tau>0$ is fixed). More recently Iula, Maalaoui and Martinazzi \cite{IMM} proved Theorem \ref{MT2} in dimension $1$, i.e. on a bounded interval $I\Subset \R$ and for the sharp constant $\alpha_{1,p}$. They also proved the following sharpness result:
\begin{equation}\label{stimaMT2c}
\sup_{u\in \tilde H^{\frac{1}{p},p}(I), \;\|(-\Delta)^{\frac{1}{2p}}u\|_{L^{p}(I)}\leq 1}\int_{I}|u|^a e^{\alpha_{1,p} |u|^\frac{p}{p-1}}dx =\infty\quad \text{for every }a>0.
\end{equation}
Notice that this is stronger than just saying that $\alpha_{1,p}$ is optimal. In Theorem \ref{MT2} we are not able to prove the analog of \eqref{stimaMT2c} because we use cut-off functions, which are convenient, but difficult to estimate when handling fractional norms. The proof of \eqref{stimaMT2c} instead relies on constructing test functions of the form
$$u(x)=\int_I G_{\frac 1p}(x,y) f(y)dy,$$
where $f$ is suitably prescribed, and on the fact that such $u$ belongs to $\tilde H^{\frac 1p,p}(I)$. The same does not hold when dealing with operators of order $\frac np>1$, since $u$ defined via a Green representation formula might not be regular enough at the boundary to belong to the right space $\tilde H^{\frac{n}{p},p}(\Omega)$.

\medskip

Recently, extending results of Cassani and Tarsi \cite{CT}, Xiao and Zhai \cite{XZ} considered a fractional Adams' type inequality under the assumption that $(-\Delta)^\frac{n}{2p} u$ is supported in $\Omega$ (which is not implied by and in general not compatible with our request that $u$ itself is supported in $\Omega$). In their work they extend the above-mentioned Adams' Theorem \ref{trmadams} to several situations, in particular considering $f$ belonging to the Lorentz space $L^{(p,q)}(\Omega)$ (when $\frac{n}{p}\in \mathbb{N}$ this had been previously done by Alberico \cite{Alb}). For further extensions we refer to the work of Fontana and Morpurgo \cite{FM}.

\begin{trm}[Xiao-Zhai]\label{XZ} Let $\Omega\subset\R^n$ be open and have finite measure, let $p\in (1,\infty)$, and let the Riesz potential $I_\alpha$ be defined as in Theorem \ref{trmadams}. For $q\in (1,\infty]$ set $\gamma_{n,p,q}:=\big(\frac{n}{\omega_{n-1}}\big)^\frac{q'}{p'}$. Then for $q\in (1,\infty)$ one has
$$\sup_{f\in L^{(p,q)}(\Omega), \;\|f\|_{L^{(p,q)}(\Omega)}\le 1}\int_\Omega e^{\gamma_{n,p,q}|I_\frac{n}{p} f|^{q'}}dx\le c_{n,p,q}|\Omega|,$$
and the constant $\gamma_{n,p,q}$ is sharp.
When $q=1$
$$\|I_\frac np f\|_{L^\infty(\Omega)}\le \left(\frac{\omega_{n-1}}{n}\right)^\frac{1}{p'}\|f\|_{L^{(p,1)}(\Omega)},\quad \text{for every }f\in L^{(p,1)}(\Omega).$$
Finally when $q=\infty$ (and by convention $q'=1$)
$$\sup_{f\in L^{(p,\infty)}(\Omega),\;\|f\|_{L^{(p,\infty)}(\Omega)}\le 1}\int_\Omega e^{\gamma|I_\frac{n}{p} f|}dx \le \frac{d_{n,p}|\Omega|}{\gamma_{n,p,\infty} -\gamma}\quad \text{for every }\gamma<\gamma_{n,p,\infty},$$
and the constant $\gamma_{n,p,\infty}$ cannot be replaced by a larger one.
\end{trm}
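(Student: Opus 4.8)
\emph{Proof idea.} The plan is to follow the classical Adams scheme --- symmetrization, O'Neil's lemma, and a sharp one--dimensional exponential estimate --- as adapted to the Lorentz scale by Alberico in the integer case. \textbf{Step 1: reduction to one dimension.} First I would symmetrize: a rearrangement argument gives $(I_{n/p}f)^{**}(t)\le (I_{n/p}f^*)^{**}(t)$ on the ball $\Omega^*=B_R$ with $|B_R|=|\Omega|$, and since the Lorentz quasinorm depends only on $f^*$ it suffices to treat $f$ radially decreasing on a ball. Writing $I_{n/p}f = k*(f\chi_\Omega)$ with $k(z)=|z|^{-n/p'}$ and $k^*(t)=(\omega_{n-1}/n)^{1/p'}t^{-1/p'}$, O'Neil's lemma together with the logarithmic substitution $t=|\Omega|e^{-\tau}$, $\phi(\sigma):=|\Omega|^{1/p}e^{-\sigma/p}f^*(|\Omega|e^{-\sigma})$, turns the constraint $\|f\|_{L^{(p,q)}(\Omega)}\le 1$ into $\int_0^\infty\phi^q\,d\sigma\le 1$ and yields $v(\tau)\le (\omega_{n-1}/n)^{1/p'}\int_0^\infty a(\sigma,\tau)\phi(\sigma)\,d\sigma$, where $v$ is the transform of $I_{n/p}f$ and $a(\sigma,\tau)=\chi_{\{\sigma\le\tau\}}+p\,e^{-(\sigma-\tau)/p'}\chi_{\{\sigma>\tau\}}$ satisfies $a(\sigma,\tau)\le 1$ for $\sigma\le\tau$ and $\sup_\tau\big(\int_\tau^\infty a(\sigma,\tau)^{q'}\,d\sigma\big)^{1/q'}<\infty$; consequently $\int_\Omega e^{\gamma|I_{n/p}f|^{q'}}dx\le |\Omega|\int_0^\infty e^{\gamma(\omega_{n-1}/n)^{q'/p'}w(\tau)^{q'}-\tau}\,d\tau$ with $w(\tau)=\int_0^\infty a(\sigma,\tau)\phi(\sigma)\,d\sigma$.

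\textbf{Step 2: the one--dimensional lemma and the sharp constant ($1<q<\infty$).} Next I would invoke the Lorentz version of Adams' Lemma~1: if $w(\tau)=\int_0^\infty a(\sigma,\tau)\phi(\sigma)\,d\sigma$ with $\int_0^\infty\phi^q\le 1$, with $a\le 1$ for $\sigma\le\tau$ and finite tail norm as above, then $\int_0^\infty e^{w(\tau)^{q'}-\tau}\,d\tau\le c$. I would prove this by the Moser truncation: by Hölder $w(\tau)\le\tau^{1/q'}(\int_0^\tau\phi^q)^{1/q}+(\text{bounded tail})$, one splits $[0,\infty)$ according to whether $w(\tau)^{q'}$ exceeds, say, $\tau/2$, and on the bad set uses that $\int_0^\tau\phi^q<1$ to force $w(\tau)^{q'}-\tau$ to be dominated by a negative multiple of $\tau$ plus a constant, whence the integral converges. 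Comparing with Step~1, the estimate holds precisely when $\gamma(\omega_{n-1}/n)^{q'/p'}\le 1$, i.e. for $\gamma\le(n/\omega_{n-1})^{q'/p'}=\gamma_{n,p,q}$, which gives the inequality. For sharpness I would test with the extremal profile, namely $f_\ell$ proportional to a truncation of $k^{p'-1}$ to a shrinking annulus and normalized so that $\|f_\ell\|_{L^{(p,q)}}\le 1$; then $I_{n/p}f_\ell$ is almost constant and large on a tiny ball, and a direct computation shows $\int_{B_\delta}e^{\gamma|I_{n/p}f_\ell|^{q'}}dx\to\infty$ for every $\gamma>\gamma_{n,p,q}$.

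\textbf{Step 3: the endpoints $q=1$ and $q=\infty$.} For $q=1$ (so $q'=\infty$) the estimate degenerates to an $L^\infty$ bound, immediate from $\|I_{n/p}f\|_\infty\le\int_0^\infty k^*(t)f^*(t)\,dt=(\omega_{n-1}/n)^{1/p'}\int_0^\infty t^{1/p-1}f^*(t)\,dt=(\omega_{n-1}/n)^{1/p'}\|f\|_{L^{(p,1)}}$, noting $(\omega_{n-1}/n)^{1/p'}=\gamma_{n,p,\infty}^{-1}$. For $q=\infty$ (so $q'=1$ by convention), from $f^*(t)\le t^{-1/p}$ I would establish, via the same rearrangement bound carefully iterated, the moment estimate $\|I_{n/p}f\|_{L^m(\Omega)}^m\le d_{n,p}\,m!\,\gamma_{n,p,\infty}^{-m}|\Omega|$, so that $\int_\Omega e^{\gamma|I_{n/p}f|}dx=\sum_{m\ge 0}\frac{\gamma^m}{m!}\int_\Omega|I_{n/p}f|^m dx\le d_{n,p}|\Omega|\sum_{m\ge 0}(\gamma/\gamma_{n,p,\infty})^m=\frac{d_{n,p}|\Omega|\,\gamma_{n,p,\infty}}{\gamma_{n,p,\infty}-\gamma}$ for $\gamma<\gamma_{n,p,\infty}$; the divergence of this series at $\gamma=\gamma_{n,p,\infty}$, made rigorous by the test functions of Step~2, shows $\gamma_{n,p,\infty}$ is optimal.

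\textbf{Main obstacle.} The crux is Step~2: running the Moser truncation with the \emph{exact} constant while simultaneously controlling the genuine Hardy part of $w$ (where $a\equiv 1$ on $\sigma\le\tau$, and it is precisely this borderline equality that makes $\gamma_{n,p,q}$ sharp) and the exponentially damped tail, and being scrupulous about the normalization of $\|\cdot\|_{L^{(p,q)}}$, which is what produces the exponent $q'/p'$ and separates the three regimes. In the case $q=\infty$ the extra difficulty is obtaining the moment bound with the sharp $m!$ growth: a cruder $m^m$-type bound would only yield finiteness for $\gamma$ below a fixed fraction of $\gamma_{n,p,\infty}$, and hence neither the sharp threshold nor the precise blow-up rate $(\gamma_{n,p,\infty}-\gamma)^{-1}$.
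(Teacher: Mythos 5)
The paper does not prove this statement: Theorem \ref{XZ} is quoted verbatim from Xiao and Zhai \cite{XZ}, and the present paper uses it as a black box (together with Adams' Theorem \ref{trmadams}) to deduce Theorems \ref{MT2} and \ref{MT3} via the Green representation of Proposition \ref{propgreen2}. So there is no proof in the paper against which to compare your attempt.

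That said, your sketch correctly identifies the Adams--O'Neil--Alberico scheme that Xiao and Zhai actually follow: rearrangement and O'Neil's convolution inequality to linearize the Riesz potential into a one-dimensional kernel $a(\sigma,\tau)$, the logarithmic change of variables, a one-dimensional Adams-type exponential lemma in the Lorentz normalization, and Moser-type test profiles for sharpness. Two cautions about the details. In Step~2, ``splitting according to whether $w(\tau)^{q'}>\tau/2$'' oversimplifies the one-dimensional lemma: Adams' argument bounds the Lebesgue measure of the level sets $\{\tau: F(\tau)\le\lambda\}$, where $F(\tau):=\tau-w(\tau)^{q'}$, uniformly by $A+|\lambda|$ through a careful two-case analysis, and it is verifying that this goes through with the constraint $\int\phi^q\le 1$ (rather than $\int\phi^p\le1$) that constitutes the real content; the H\"older-plus-naive-truncation shortcut as written would not isolate the sharp constant. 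In Step~3 (case $q=\infty$) the proposed moment bound $\|I_{n/p}f\|_{L^m(\Omega)}^m\le d_{n,p}\,m!\,\gamma_{n,p,\infty}^{-m}|\Omega|$ with $d_{n,p}$ independent of $m$ is delicate: from the rearrangement estimate one gets $(I_{n/p}f)^*(t)\le\gamma_{n,p,\infty}^{-1}\log(|\Omega|/t)+C$, and if the additive constant $C$ is handled carelessly one only gets $m^m$-type growth. It is cleaner to bypass the moments and integrate the logarithmic bound directly: $\int_\Omega e^{\gamma|I_{n/p}f|}dx\le e^{\gamma C}|\Omega|^{\gamma/\gamma_{n,p,\infty}}\int_0^{|\Omega|}t^{-\gamma/\gamma_{n,p,\infty}}dt=e^{\gamma C}\gamma_{n,p,\infty}|\Omega|/(\gamma_{n,p,\infty}-\gamma)$, which yields the stated $(\gamma_{n,p,\infty}-\gamma)^{-1}$ blow-up rate at once.
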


 Still resting on the Green representation formula \eqref{greenform}, Xiao and Zhai's results can be immediately extended to the case of functions supported in $\Omega$ without any assumption on the support of their fractional derivatives. More precisely for $q\in [1,\infty]$ let $\tilde H^{s,(p,q)}(\Omega)$ denote the closure of $C^\infty_c(\Omega)$ under the norm
\begin{equation}\label{defHspq}
\|u\|_{H^{s,(p,q)}(\R^n)}:=\|u\|_{L^{(p,q)}(\R^n)} +\|(-\Delta)^\frac{s}{2} u\|_{L^{(p,q)}(\R^n)}.
\end{equation}
Notice that by Netrusov's theorem (Theorem \ref{lemmadens} in the appendix) $\tilde H^{\frac{n}{p},(p,p)}(\Omega)=\tilde H^{\frac{n}{p},p}(\Omega)$, the latter space being defined in \eqref{defHsp}. We then obtain:

\begin{trm}\label{MT3} Let $\Omega$, $p$, $\alpha_{n,p}$ and $K_{n,p}$ be as in Theorem \ref{MT2}. For $q\in (1,\infty]$ set $\beta_{n,p,q}:=(\alpha_{n,p})^\frac{q'}{p'}$.
Then for $q\in (1,\infty)$
\begin{equation}\label{stimaMTL}
\sup_{u\in \tilde H^{\frac{n}{p},(p,q)}(\Omega), \;\|(-\Delta)^{\frac{n}{2p}}u\|_{L^{(p,q)}(\Omega)}\leq 1}\int_{\Omega} e^{\beta_{n,p,q}|u|^{q'}}dx \leq c_{n,p,q}|\Omega|,
\end{equation}
and the constant $\beta_{n,p,q}$ is sharp.
When $q=1$
\begin{equation}\label{stimaMTL2}
\|u\|_{L^\infty(\Omega)}\le (\alpha_{n,p})^{-\frac{1}{p'}} \|(-\Delta)^\frac{n}{2p} u\|_{L^{(p,1)}(\Omega)},\quad \text{for every } u\in \tilde H^{\frac{n}{p},(p,1)}(\Omega).
\end{equation}
Finally when $q=\infty$ we get
\begin{equation}\label{stimaMTL3}
\sup_{u\in \tilde H^{\frac{n}{p},(p,\infty)}(\Omega), \;\|(-\Delta)^{\frac{n}{2p}}u\|_{L^{(p,\infty)}(\Omega)}\leq 1}\int_{\Omega} e^{\beta|u|}dx \leq \frac{K_{n,\frac{n}{p}}^{-1} d_{n,p}|\Omega|}{\beta_{n,p,\infty}-\beta},\quad \text{for every }\beta<\beta_{n,p,\infty}.
\end{equation}
The constant $\beta_{n,p,\infty}$ in \eqref{stimaMTL3} cannot be replaced by a larger one, . The constants $c_{n,p,q}$ and $d_{n,p}$ are as in Theorem \ref{XZ}.
\end{trm}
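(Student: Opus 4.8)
\medskip
\noindent\emph{Proof strategy.}
The plan is to deduce Theorem~\ref{MT3} from the Xiao--Zhai inequality (Theorem~\ref{XZ}) exactly as Theorem~\ref{MT2} is deduced from Adams' Theorem~\ref{trmadams}: one replaces the Riesz potential by the Green representation \eqref{greenform}, controls the Green function pointwise by the fundamental solution, and performs the elementary rescaling that turns the constants $\gamma_{n,p,q}$, $\gamma_{n,p,\infty}$ of Theorem~\ref{XZ} into $\beta_{n,p,q}=(\alpha_{n,p})^{q'/p'}$, $\beta_{n,p,\infty}$. The one estimate that does all the work is the pointwise bound
\[
|u(x)|\ \le\ K_{n,\frac np}\,I_{\frac np}\big(|(-\Delta)^{\frac{n}{2p}}u|\big)(x)\qquad\text{for a.e. }x\in\Omega ,
\]
which should hold for every $u\in\tilde H^{\frac np,(p,q)}(\Omega)$ and every $q\in[1,\infty]$. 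Note that only this \emph{inequality} is needed, not the exact representation \eqref{greenform}; this makes the passage from $C^\infty_c(\Omega)$ to the closure robust. For $u\in C^\infty_c(\Omega)$ the bound is immediate: Proposition~\ref{propgreen2} gives $u(x)=\int_\Omega G_{\frac np}(x,y)(-\Delta)^{\frac n{2p}}u(y)\,dy$, and \eqref{G<Fbis} yields $|G_{\frac np}(x,y)|\le K_{n,\frac np}|x-y|^{\frac np-n}$, so $|u(x)|\le K_{n,\frac np}\int_\Omega|x-y|^{\frac np-n}|(-\Delta)^{\frac n{2p}}u(y)|\,dy=K_{n,\frac np}I_{\frac np}(|(-\Delta)^{\frac n{2p}}u|)(x)$.

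\noindent For general $u$ in the $C^\infty_c(\Omega)$-closure $\tilde H^{\frac np,(p,q)}(\Omega)$, pick $u_j\in C^\infty_c(\Omega)$ with $u_j\to u$ and $(-\Delta)^{\frac n{2p}}u_j\to(-\Delta)^{\frac n{2p}}u$ in $L^{(p,q)}(\R^n)$; since $\big|\,|(-\Delta)^{\frac n{2p}}u_j|-|(-\Delta)^{\frac n{2p}}u|\,\big|\le|(-\Delta)^{\frac n{2p}}(u_j-u)|$ and, by Theorem~\ref{XZ}, $I_{\frac np}$ sends $L^{(p,q)}$-convergent sequences to sequences converging in measure on $\Omega$ (via the Chebyshev-type bound coming from the exponential integrability, or the $L^\infty$ bound when $q=1$), we may pass to the limit along a subsequence for which also $u_j\to u$ a.e.\ and obtain the bound for $u$. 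This works uniformly in $q\in[1,\infty]$; for $q=\infty$ one simply uses convergence in measure in place of norm density, which is harmless since $\tilde H^{\frac np,(p,\infty)}(\Omega)$ is \emph{defined} as the $C^\infty_c$-closure.

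\noindent Now insert the pointwise bound into Theorem~\ref{XZ}. Given $u$ with $\|(-\Delta)^{\frac n{2p}}u\|_{L^{(p,q)}(\Omega)}\le1$ put $f:=|(-\Delta)^{\frac n{2p}}u|$, so $\|f\|_{L^{(p,q)}}\le1$. From $\alpha_{n,p}=\frac n{\omega_{n-1}}K_{n,\frac np}^{-p'}$ one reads off the identity $\beta_{n,p,q}K_{n,\frac np}^{q'}=\gamma_{n,p,q}$ for every $q\in(1,\infty]$. Hence, for $q\in(1,\infty)$,
\[
\int_\Omega e^{\beta_{n,p,q}|u|^{q'}}\,dx\ \le\ \int_\Omega e^{\beta_{n,p,q}K_{n,\frac np}^{q'}|I_{\frac np}f|^{q'}}\,dx\ =\ \int_\Omega e^{\gamma_{n,p,q}|I_{\frac np}f|^{q'}}\,dx\ \le\ c_{n,p,q}|\Omega| ,
\]
which is \eqref{stimaMTL}. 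For $q=1$ the $L^\infty$ bound of Theorem~\ref{XZ} gives $\|u\|_{L^\infty}\le K_{n,\frac np}(\omega_{n-1}/n)^{1/p'}\|f\|_{L^{(p,1)}}=(\alpha_{n,p})^{-1/p'}\|(-\Delta)^{\frac n{2p}}u\|_{L^{(p,1)}}$, i.e.\ \eqref{stimaMTL2}. For $q=\infty$, given $\beta<\beta_{n,p,\infty}$ set $\gamma:=\beta K_{n,\frac np}$, which is $<\gamma_{n,p,\infty}$ by the same identity; then
\[
\int_\Omega e^{\beta|u|}\,dx\ \le\ \int_\Omega e^{\gamma|I_{\frac np}f|}\,dx\ \le\ \frac{d_{n,p}|\Omega|}{\gamma_{n,p,\infty}-\gamma}\ =\ \frac{K_{n,\frac np}^{-1}d_{n,p}|\Omega|}{\beta_{n,p,\infty}-\beta} ,
\]
which is \eqref{stimaMTL3}.

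\noindent The sharpness of $\beta_{n,p,q}$ (and of $\beta_{n,p,\infty}$) I would obtain by reproducing the construction used for the sharpness half of Theorem~\ref{MT2}: take the extremal sequences $f_j\in C^\infty_0(B_\delta)$, $B_\delta\Subset\Omega$, with $\|f_j\|_{L^{(p,q)}}\le1$ furnished by the sharpness statements of Theorems~\ref{trmadams} and \ref{XZ}, set $v_j:=I_{\frac np}f_j$ (smooth, but \emph{not} supported in $\Omega$), cut it off near the concentration point by the Schikorra-type procedure, and control the fractional Lorentz norm of the error by the disjoint-support estimate of Proposition~\ref{propdisj}; after normalizing, the resulting $u_j\in C^\infty_c(\Omega)\subset\tilde H^{\frac np,(p,q)}(\Omega)$ satisfy $\|(-\Delta)^{\frac n{2p}}u_j\|_{L^{(p,q)}}\le1$ while $\int_\Omega e^{\beta|u_j|^{q'}}\,dx\to\infty$ whenever $\beta>\beta_{n,p,q}$, the same constant identity governing the matching. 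The main obstacle is precisely this last point: estimating the fractional Lorentz norm of the cut-off functions is delicate, and (as already for Theorem~\ref{MT2}) it is the reason one does not obtain a sharpness statement of the strength of \eqref{stimaMT2c}. Everything else is routine once the pointwise bound above and the identity $\beta_{n,p,q}K_{n,\frac np}^{q'}=\gamma_{n,p,q}$ are in hand.
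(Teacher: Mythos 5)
Your argument is correct and follows the paper's proof essentially verbatim: the pointwise bound $|u|\le K_{n,\frac np}\,I_{\frac np}|(-\Delta)^{\frac n{2p}}u|$ coming from Proposition~\ref{propgreen2} and \eqref{G<Fbis}, the algebraic identity $\beta_{n,p,q}K_{n,\frac np}^{q'}=\gamma_{n,p,q}$, an application of Theorem~\ref{XZ}, and sharpness from Proposition~\ref{sharp}. The only stylistic difference is in the passage from $C^\infty_c(\Omega)$ to its closure: you argue via convergence in measure of the Riesz potentials (Chebyshev plus the exponential bound of Theorem~\ref{XZ}), whereas the paper more directly uses the embedding $L^{(p,q)}(\Omega)\hookrightarrow L^1(\Omega)$ for $|\Omega|<\infty$, $p>1$, and then invokes the last statement of Proposition~\ref{propgreen2}.
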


We mention that Adams-Moser-Trudinger type inequalities of integer order on manifolds have been proven by Fontana \cite{fon}. In the case $p=1$ related inequalities (similar to \eqref{stimaMTL3}) have been originally proven by Br\'ezis and Merle \cite{BM} in dimension $2$, and then extended by C-S. Lin \cite{lin}, J-C. Wei \cite{wei} and the author \cite{mar1} to arbitrary even dimension, and recently by Da Lio-Martinazzi-Rivi\`ere \cite{DLMR} in dimension $n=1$ and by A. Hyder \cite{hyd2} in arbitrary odd dimension.

\medskip

The paper is organized as follows. In Section \ref{sec:2} we will prove Theorems \ref{MT2} and \ref{MT3}. In section \ref{sec:3} we will discuss a couple of applications to semilinear equations involving exponential nonlinearities, including those arising in the prescribed $Q$-curvature problem. Open questions are discussed in Section \ref{sec:4}, while in the Appendix we collect some known results which we need for the proofs in Section \ref{sec:2}.

\section{Green functions and proof of Theorems \ref{MT2} and \ref{MT3}}\label{sec:2}

The following lemma is well known. One can prove it by hands using \eqref{deffraclap} and the formula for the Fourier transform of $|x|^{s-n}$, see e.g. \cite[Theorem 5.9]{LL}.

\begin{lemma}\label{green2}
The fundamental solution of $(-\Delta)^\frac{s}{2}$ on $\R^{n}$ is
$F_s(x)=K_{n,s}|x|^{s-n},$
in the sense that $F_s\in L_s (\R^n)$ and
$(-\Delta)^\frac{s}{2}F_s=\delta_0$
in the sense of tempered distributions (see \eqref{deffraclap}). Moreover
$$(-\Delta)^\frac{s}{2}(F_s* f)=f\quad \text{for every }f\in \mathcal{S}(\R^n).$$
\end{lemma}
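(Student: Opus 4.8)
The plan is to reduce everything to the classical formula for the Fourier transform of the Riesz kernel $|x|^{s-n}$. Throughout, $\mathcal{F}$ is the unitary transform normalized so that $(-\Delta)^{s/2}$ has symbol exactly $|\xi|^{s}$, i.e. $\mathcal{F}\varphi(\xi)=(2\pi)^{-n/2}\int_{\R^{n}}\varphi(x)e^{-ix\cdot\xi}dx$; in particular $\widehat{f*g}=(2\pi)^{n/2}\hat f\hat g$ and $\mathcal{F}\delta_{0}=(2\pi)^{-n/2}$. It is enough to treat $0<s<n$, which is the range relevant in the sequel (there $s=\frac np$ with $p>1$) and in which $K_{n,s}\in(0,\infty)$.

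First I would check $F_{s}\in L_{s}(\R^{n})$: since $s-n>-n$ the kernel $|x|^{s-n}$ is locally integrable, and near infinity $\frac{|x|^{s-n}}{1+|x|^{n+s}}\sim|x|^{-2n}$ is integrable, so $\int_{\R^{n}}\frac{|F_{s}(x)|}{1+|x|^{n+s}}dx<\infty$ and $F_{s}$ is in particular a tempered distribution. The heart of the matter is the identity, valid as tempered distributions for $0<s<n$,
$$\mathcal{F}\big(|x|^{s-n}\big)(\xi)=2^{\,s-\frac n2}\,\frac{\Gamma(s/2)}{\Gamma((n-s)/2)}\,|\xi|^{-s},$$
which one proves ``by hand'': e.g. write $|x|^{s-n}=c_{n,s}\int_{0}^{\infty}t^{\frac{n-s}{2}-1}e^{-t|x|^{2}}dt$, pair against a Schwartz function, apply Fubini together with the self-duality of Gaussians, and recognize a Gamma integral; alternatively one argues by analytic continuation in $s$ from the strip where $|x|^{s-n}\in L^{1}+L^{2}$ (this is \cite[Theorem 5.9]{LL}). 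Multiplying by the constant $K_{n,s}$ of \eqref{defalpha}, all $\Gamma$-factors and powers of $2$ cancel against the $(2\pi)^{-n/2}$ of the normalization, and we obtain
$$\mathcal{F}(F_{s})=(2\pi)^{-\frac n2}\,|\xi|^{-s}.$$
(As an alternative producing $K_{n,s}$ directly, one can instead use subordination: the convolution kernel of $(-\Delta)^{-s/2}=\frac{1}{\Gamma(s/2)}\int_{0}^{\infty}t^{\frac s2-1}e^{t\Delta}dt$ is $\frac{1}{\Gamma(s/2)}\int_{0}^{\infty}t^{\frac s2-1}(4\pi t)^{-\frac n2}e^{-|x|^{2}/4t}dt$, which after the substitution $u=|x|^{2}/4t$ is exactly $K_{n,s}|x|^{s-n}=F_{s}(x)$.)

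Everything else now follows by unwinding \eqref{deffraclap}. Since $|\xi|^{s}\mathcal{F}(F_{s})=(2\pi)^{-n/2}$ is a tempered distribution, one checks — transferring the pairing to the Fourier side, which is legitimate after a routine Gaussian regularization of $F_{s}$ since the test function $(-\Delta)^{s/2}\varphi$ obeys the decay bound $|(-\Delta)^{s/2}\varphi(x)|\le C_{\varphi}(1+|x|^{n+s})^{-1}$ (see e.g. Proposition 2.1 in \cite{hyd2}) — that
$$\langle(-\Delta)^{s/2}F_{s},\varphi\rangle=\int_{\R^{n}}F_{s}\,(-\Delta)^{s/2}\varphi\,dx=\int_{\R^{n}}\mathcal{F}(F_{s})(\xi)\,|\xi|^{s}\,\hat\varphi(-\xi)\,d\xi=(2\pi)^{-\frac n2}\!\int_{\R^{n}}\hat\varphi=\varphi(0)$$
for all $\varphi\in\mathcal{S}(\R^{n})$, the last equality being Fourier inversion at the origin; hence $(-\Delta)^{s/2}F_{s}=\delta_{0}$. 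Finally, for $f\in\mathcal{S}(\R^{n})$ the convolution $F_{s}*f$ is a bounded smooth function (hence tempered), with $\mathcal{F}(F_{s}*f)=(2\pi)^{n/2}\mathcal{F}(F_{s})\hat f=|\xi|^{-s}\hat f\in L^{1}(\R^{n})$ because $s<n$ and $\hat f\in\mathcal{S}$; therefore $(-\Delta)^{s/2}(F_{s}*f)=\mathcal{F}^{-1}(|\xi|^{s}|\xi|^{-s}\hat f)=f$.

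The only genuinely delicate points — the ones I would write out in full — are the rigorous justification of the distributional Fourier transform of $|x|^{s-n}$ (the regularization/analytic-continuation argument, which may simply be cited from \cite[Theorem 5.9]{LL}) and the bookkeeping of the powers of $2\pi$ and $2$ so that the constant emerges as exactly $K_{n,s}$; there is no conceptual obstacle.
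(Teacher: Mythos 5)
Your proof is correct and follows exactly the route the paper indicates: the paper gives no proof beyond the remark that one can ``prove it by hands using \eqref{deffraclap} and the formula for the Fourier transform of $|x|^{s-n}$, see e.g.\ \cite[Theorem 5.9]{LL}''. You have filled in the bookkeeping of constants (which indeed collapses $K_{n,s}$ down to $\mathcal{F}(F_s)=(2\pi)^{-n/2}|\xi|^{-s}$ for the unitary transform), the pairing computation giving $\delta_0$, and the convolution identity, and your flags on where care is needed (the Parseval step against the non-Schwartz function $(-\Delta)^{s/2}\varphi$, and the analytic-continuation or heat-kernel justification of the Riesz kernel transform) are the right ones.
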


\begin{prop}\label{propgreen}
Let $\Omega\subset\R^n$ be open and have finite measure and $\sigma\in (0,2]$ such that $\sigma<n$ be fixed. Then for every $x\in \Omega$ there is a function $G_\sigma(x,\cdot)\in L^1(\R^n)$ satisfying
\begin{equation}\label{eqgreen}
\left\{
\begin{array}{ll}
(-\Delta)^\frac{\sigma}{2} G_\sigma(x,\cdot)=\delta_x&\text{in } \Omega\\
G_\sigma(x,\cdot)\equiv 0&\text{in } \R^n\setminus \Omega,
\end{array}
\right.
\end{equation}
the first equation being in the sense of distributions (i.e. as in Proposition \ref{trmexist} in the appendix). Moreover
\begin{equation}\label{G<F}
0\le G_\sigma(x,y)\leq F_\sigma(x-y) \quad \text{for a.e. }y\ne x\in \Omega.
\end{equation}
Finally given $u\in \tilde H^{\sigma,p}(\Omega)$ for some $p\ge 1$, we have
\begin{equation}\label{eqgreenb}
u(x) = \int_\Omega G_\sigma(x,y)(-\Delta)^\frac{\sigma}{2} u(y)dy,\quad  \text{for a.e. }x\in \Omega,
\end{equation}
where the right-hand side is well defined for a.e. $x\in \Omega$ thanks to \eqref{G<F} and Fubini's theorem.
\end{prop}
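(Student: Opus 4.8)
\medskip

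\noindent\emph{Proof proposal.} The idea is to produce $G_\sigma(x,\cdot)$ by subtracting from the cut-off fundamental solution $F_\sigma(x-\cdot)$ a ``regular part'' living in the natural energy space, and to run the whole argument through the variational (Markovian) maximum principle, which is available precisely because $\sigma\le 2$: for $\sigma\in(0,2)$ the operator $(-\Delta)^{\frac\sigma2}$ is generated by the Gagliardo form $\mathcal{E}(\varphi,\psi)=c_{n,\sigma}\int\!\!\int\frac{(\varphi(y)-\varphi(z))(\psi(y)-\psi(z))}{|y-z|^{n+\sigma}}dy\,dz$, and for $\sigma=2$ by $\mathcal{E}(\varphi,\psi)=\int\nabla\varphi\cdot\nabla\psi$; in either case the integrand defining $\mathcal{E}(\varphi^+,\varphi^-)$ is $\le 0$ pointwise, so $\mathcal{E}(\varphi^+,\varphi^-)\le 0$. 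Fix $x\in\Omega$, pick $r>0$ with $B_r(x)\subset\Omega$ and $\chi\in C^\infty_c(B_r(x))$, $\chi\ge 0$, $\chi\equiv 1$ near $x$. Using Lemma \ref{green2}, the compactly supported function $\chi F_\sigma(x-\cdot)$ satisfies $(-\Delta)^{\frac\sigma2}\big(\chi F_\sigma(x-\cdot)\big)=\delta_x+g$, where $g:=-(-\Delta)^{\frac\sigma2}\big((1-\chi)F_\sigma(x-\cdot)\big)$ is smooth with $|g(y)|\lesssim(1+|y|)^{-n-\sigma}$; hence $g\in L^\infty\cap L^1(\R^n)$, and since $|\Omega|<\infty$ and $\sigma<n$ give the Poincar\'e inequality $\|\varphi\|_{L^2}\lesssim|\Omega|^{\sigma/(2n)}\mathcal{E}(\varphi,\varphi)^{1/2}$ on $\tilde H^{\frac\sigma2,2}(\Omega)$ (via the fractional Sobolev embedding), $\varphi\mapsto\int_\Omega g\varphi$ is a bounded functional there. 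By Proposition \ref{trmexist} (equivalently, by minimizing the strictly convex, coercive, weakly lower semicontinuous functional $\varphi\mapsto\frac12\mathcal{E}(\varphi,\varphi)-\int_\Omega g\varphi$) there is a unique $v\in\tilde H^{\frac\sigma2,2}(\Omega)$ with $(-\Delta)^{\frac\sigma2}v=g$ in $\Omega$. I then put
$$G_\sigma(x,\cdot):=\chi F_\sigma(x-\cdot)-v.$$
This lies in $L^1(\R^n)$, vanishes in $\R^n\setminus\Omega$ (both summands do), and $(-\Delta)^{\frac\sigma2}G_\sigma(x,\cdot)=\delta_x+g-g=\delta_x$ in $\Omega$, which is \eqref{eqgreen}.

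For \eqref{G<F} I would argue via the truncation inequality. Interior regularity of $v$ (with $g$ bounded) gives $v\in L^\infty_\loc(\Omega)$, so $G_\sigma(x,\cdot)\to+\infty$ near $x$; thus $\phi:=G_\sigma(x,\cdot)$ has $\phi^-$ bounded, supported in $\Omega$, vanishing near $x$, and $\phi^-\le v^-\in\tilde H^{\frac\sigma2,2}(\Omega)$, hence $\phi^-\in\tilde H^{\frac\sigma2,2}(\Omega)$. Testing the equation for $G_\sigma(x,\cdot)$ against $\phi^-$ is admissible because, $\phi^-$ vanishing near $x$, the singular part of $\chi F_\sigma(x-\cdot)$ pairs absolutely with $\phi^-$, so $\mathcal{E}(\chi F_\sigma(x-\cdot),\phi^-)=\langle\delta_x+g,\phi^-\rangle$, which combined with the Euler--Lagrange identity $\mathcal{E}(v,\phi^-)=\int_\Omega g\phi^-$ yields $\mathcal{E}(\phi,\phi^-)=\phi^-(x)=0$; then $0=\mathcal{E}(\phi,\phi^-)=\mathcal{E}(\phi^+,\phi^-)-\mathcal{E}(\phi^-,\phi^-)\le-\mathcal{E}(\phi^-,\phi^-)\le 0$, so $\mathcal{E}(\phi^-,\phi^-)=0$ and, since $|\Omega|<\infty$, $\phi^-\equiv 0$, i.e.\ $G_\sigma(x,\cdot)\ge 0$. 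The upper bound follows in the same way from $H_x:=F_\sigma(x-\cdot)-G_\sigma(x,\cdot)=(1-\chi)F_\sigma(x-\cdot)+v$: here $(-\Delta)^{\frac\sigma2}H_x=\delta_x-\delta_x=0$ in $\Omega$, the term $(1-\chi)F_\sigma(x-\cdot)$ is bounded with polynomial decay (no singularity), $H_x\ge 0$ outside $\Omega$, and on $\{H_x<0\}$ one has $H_x^-\le v^-$, so $H_x^-\in\tilde H^{\frac\sigma2,2}(\Omega)$; testing against $H_x^-$ gives $\mathcal{E}(H_x,H_x^-)=0$, whence $H_x^-\equiv 0$, i.e.\ $G_\sigma(x,\cdot)\le F_\sigma(x-\cdot)$.

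For the representation formula \eqref{eqgreenb} I first take $u\in C^\infty_c(\Omega)$: then $u=F_\sigma*(-\Delta)^{\frac\sigma2}u$ pointwise (by the Fourier transform, cf.\ Lemma \ref{green2}; the convolution converges absolutely since $(-\Delta)^{\frac\sigma2}u$ is bounded and $O(|y|^{-n-\sigma})$). With $H_x=F_\sigma(x-\cdot)-G_\sigma(x,\cdot)$ (extending $G_\sigma(x,\cdot)$ by $0$) one has $H_x\in L_\sigma(\R^n)$, $0\le H_x\le F_\sigma(x-\cdot)$ by \eqref{G<F}, and, pairing via \eqref{deffraclap} and using Lemma \ref{green2} together with the meaning of \eqref{eqgreen}, $\langle H_x,(-\Delta)^{\frac\sigma2}\varphi\rangle=\varphi(x)-\varphi(x)=0$ for every $\varphi\in C^\infty_c(\Omega)$; applying this with $\varphi=u$ and splitting $F_\sigma(x-\cdot)=G_\sigma(x,\cdot)+H_x$ (all integrals absolutely convergent since $H_x,G_\sigma(x,\cdot)\le F_\sigma(x-\cdot)$) gives
$$u(x)=\int_{\R^n}F_\sigma(x-y)(-\Delta)^{\frac\sigma2}u(y)\,dy=\int_\Omega G_\sigma(x,y)(-\Delta)^{\frac\sigma2}u(y)\,dy.$$
To pass to general $u\in\tilde H^{\sigma,p}(\Omega)$, pick $u_k\in C^\infty_c(\Omega)$ with $u_k\to u$ in $L^p(\R^n)$ and $(-\Delta)^{\frac\sigma2}u_k\to(-\Delta)^{\frac\sigma2}u$ in $L^p(\R^n)$ (Netrusov's theorem, Theorem \ref{lemmadens}). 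By \eqref{G<F}, Fubini and the bathtub principle, $Tf(x):=\int_\Omega G_\sigma(x,y)f(y)\,dy$ satisfies $\|Tf\|_{L^1(\Omega)}\lesssim|\Omega|^{\sigma/n}\|f\|_{L^1(\Omega)}$; since $\|f\|_{L^1(\Omega)}\le|\Omega|^{1/p'}\|f\|_{L^p(\Omega)}$ and $u_k=T\big((-\Delta)^{\frac\sigma2}u_k\big)$, letting $k\to\infty$ yields $u=T\big((-\Delta)^{\frac\sigma2}u\big)$ in $L^1(\Omega)$, i.e.\ a.e., which is \eqref{eqgreenb}; the same estimate applied to $|(-\Delta)^{\frac\sigma2}u|$ shows the right-hand side is absolutely convergent for a.e.\ $x$.

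I expect the main obstacle to be the variational maximum principle of the second step: because $\delta_x\notin\big(\tilde H^{\frac\sigma2,2}(\Omega)\big)^*$ when $\sigma<n$, the Green function is not itself an admissible test function, so every integration-by-parts step must be justified by isolating the local singularity with $\chi$ (the non-local tails being controlled by the decay of $g$), and one must track carefully which truncations lie in $\tilde H^{\frac\sigma2,2}(\Omega)$ and the compatibility between the distributional and the Dirichlet-form meanings of $(-\Delta)^{\frac\sigma2}$. Once this bookkeeping is in place the inequality $\mathcal{E}(\varphi^+,\varphi^-)\le 0$ does the rest, and it is exactly here that $\sigma\le 2$ is used.
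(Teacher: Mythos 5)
Your construction is a genuinely different decomposition from the paper's. You set $G_\sigma(x,\cdot)=\chi F_\sigma(x-\cdot)-v$ with $\chi$ a cut-off and $v$ solving a Poisson problem $(-\Delta)^{\frac\sigma2}v=g$ in $\tilde H^{\frac\sigma2,2}(\Omega)$, whereas the paper sets $G_\sigma(x,\cdot)=F_\sigma(x-\cdot)-H_\sigma(x,\cdot)$ with $H_\sigma$ $\sigma$-harmonic in $\Omega$ and agreeing with $F_\sigma(x-\cdot)$ outside $\Omega$. Both routes give the same $G_\sigma$ by uniqueness of the Dirichlet problem, and both ultimately rest on the Markovian inequality $\mathcal{E}(\varphi^+,\varphi^-)\le 0$. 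Your Stampacchia-style test against $\phi^-$ is in the same spirit as the paper's Proposition \ref{maxprinc2}; the difference is that the paper first applies \ref{maxprinc2} to $H_\sigma$ and $\sup g_x-H_\sigma$ (giving $G_\sigma\le F_\sigma$ immediately), and then, to get $G_\sigma\ge 0$, constructs Silvestre's comparison function $\Gamma_x\le F_\sigma(x-\cdot)$ with $(-\Delta)^{\frac\sigma2}\Gamma_x\ge 0$, so that $\Gamma_x-H_\sigma(x,\cdot)$ lies in $\tilde H^{\frac\sigma2,2}(\Omega)$ and the max principle applies. Your test-against-the-negative-part approach is workable but you must produce, somewhere, exactly the finite-energy surrogate that $\Gamma_x$ provides.

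This is where your proof has a genuine gap. You claim $\phi^-\le v^-\in\tilde H^{\frac\sigma2,2}(\Omega)$, hence $\phi^-\in\tilde H^{\frac\sigma2,2}(\Omega)$. First, the pointwise bound is wrong: from $\phi^-=(v-\chi F_\sigma)^+$ and $\chi F_\sigma\ge 0$ you get $\phi^-\le v^+$, not $v^-$. Second, and more seriously, pointwise domination by an $H^{\frac\sigma2,2}$ function does \emph{not} imply $H^{\frac\sigma2,2}$ membership (a characteristic function can sit below a smooth bump), so this deduction is invalid as stated. Note that $\phi=\chi F_\sigma(x-\cdot)-v$ genuinely fails to lie in $H^{\frac\sigma2,2}(\R^n)$ because of the singularity of $F_\sigma$ at $x$, so the membership of $\phi^-$ has to be earned. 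The repair is to replace $\chi F_\sigma(x-\cdot)$ by a truncation $\min\{\chi F_\sigma(x-\cdot),M\}$, which is Lipschitz and compactly supported (hence in $H^{\frac\sigma2,2}(\R^n)$ by the computation used for \eqref{stimagx}); since $\phi>0$ near $x$, choosing $M$ large enough so that $\min\{\chi F_\sigma(x-\cdot),M\}-v>0$ near $x$ makes the negative part unchanged, and one can then carry out both the membership claim and the justification of $\mathcal{E}(\phi,\phi^-)$ rigorously on the truncated function. This is exactly the role the paper's $\Gamma_x$ plays, and it is not optional. (The upper bound $G_\sigma\le F_\sigma$ is easier for the opposite reason: there $H_x=(1-\chi)F_\sigma(x-\cdot)+v$ already has finite Gagliardo seminorm since $(1-\chi)F_\sigma$ is $C^1$ and decays, so $H_x^-$ does too once you note $H_x\ge 0$ outside $\Omega$.) The representation formula and approximation arguments in the last part of your proof match the paper's. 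You correctly identify the bookkeeping around $\tilde H^{\frac\sigma2,2}$ membership and the singularity as the main obstacle; the point is that this bookkeeping is not a minor technicality but the precise place where a concrete construction --- your truncation or Silvestre's --- must be inserted.
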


\begin{proof} We first consider that case $\sigma<2$. Given $x\in \Omega$ let $\delta=\frac{1}{2}\dist(x,\de\Omega)$ and $g_x\in C^1(\R^{n})$ be any function with $g_x(y)=F_\sigma(x-y)$ for $y\in\R^{n}\setminus B_\delta(x)$. We first claim that $g_x$ satisfies
\begin{equation}\label{stimagx}
\int_\Omega\int_{\R^n}\frac{(g_x(y)-g_x(z))^2}{|y-z|^{n+\sigma}}dydz<\infty.
\end{equation}
Indeed, splitting for a fixed $z\in \Omega$
\[
\begin{split}
\int_{\R^n}\frac{(g_x(y)-g_x(z))^2}{|y-z|^{n+\sigma}}dy&= \int_{B_1(z)}\frac{(g_x(y)-g_x(z))^2}{|y-z|^{n+\sigma}}dy +\int_{\R^n\setminus B_1(z)}\frac{(g_x(y)-g_x(z))^2}{|y-z|^{n+\sigma}}dy\\
&=:(I)+(II)
\end{split}
\]
we easily see that with a constant $C$ only depending on $n$ and $\sigma$
$$(I)\le \|\nabla g_x\|_{L^\infty(B_1(z))}^2 \int_{B_1(z)}\frac{1}{|y-z|^{n+\sigma-2}}dy\le C\|\nabla g_x\|_{L^\infty(\R^n)}^2,$$
(where we used that $|g_x(y)-g_x(z)|\le \|\nabla g_x\|_{L^\infty(B_1(z))}|y-z|$) and
$$(II)\le \|g_x\|_{L^\infty(\R^n)}^2\int_{\R^n\setminus B_1(z)}\frac{1}{|y-z|^{n+\sigma}}dy \le C \|g_x\|_{L^\infty(\R^n)}^2,$$
and integrating with respect to $z$ on $\Omega$ (which has finite measure) we infer that \eqref{stimagx} holds, as claimed.

\medskip

Now Proposition \ref{trmexist} in the appendix implies that there exists a unique $H_\sigma(x,\cdot)\in \tilde H^{\frac \sigma 2,2}(\Omega)+g_x$ solution to 
\begin{equation}\label{H}
\left\{
\begin{array}{ll}
(-\Delta)^\frac{\sigma}{2} H_\sigma(x,\cdot)=0 &\text{in } \Omega\\
H_\sigma(x,\cdot )=g_x &\text{in } \R^{n}\setminus \Omega,
\end{array}
\right.
\end{equation}
in the sense of distribution. Moreover, by Proposition \ref{maxprinc2} in the appendix applied to the functions $u_1= H_\sigma(x,\cdot)$ and $u_2=-H_\sigma(x,\cdot)+\sup_{\R^n\setminus\Omega} g_x$, we infer
\begin{equation}\label{stimaHmax}
0\le H_\sigma(x,y)\le  \sup_{z\in\R^n\setminus \Omega} F_\sigma(x-z)\quad \text{for a.e. }y\in \Omega.
\end{equation} 
Notice that here we used that
$$u_1\in \tilde H^{\frac \sigma2,2}(\Omega)+g_x,\quad u_2\in \tilde H^{\frac \sigma2,2}(\Omega)-g_x+\sup_{\R^n\setminus\Omega} g_x,$$
and the functions $g_1=g_x$ and $g_2=-g_x+\sup_{\R^n\setminus\Omega} g_x$ satisfy \eqref{intg} thanks to \eqref{stimagx}.

Set
$$G_\sigma(x,y):=F_\sigma(x-y)-H_\sigma(x,y),\quad (x,y)\in \Omega\times \R^n.$$
That $G_\sigma(x,\cdot)$ satisfies \eqref{eqgreen} follows at once from Lemma \ref{green2} and \eqref{H}. We also have $G_\sigma(x,y)\le F_\sigma(x-y)$ thanks to \eqref{stimaHmax}. 


We want to show that $G_\sigma(x,\cdot )\ge 0$ in $\Omega$. Since $H_\sigma(x,\cdot)$ is bounded, for $\ve\in (0,\delta]$ sufficiently small we have
$$F_\sigma(x-y)>H_\sigma(x,y) \quad \text{for a.e. }y\in B_\ve(x),$$
hence $G_\sigma(x,\cdot)\ge 0$ in $B_\ve(x)$.
We can now modify $F_\sigma(x-\cdot)$ in $B_\ve(x)$ to obtain a new function $\Gamma_x\in C^{1,1}(\R^n)$ with
$$\Gamma_x\le F_\sigma(x-\cdot)\text{ in }\R^n,\quad \Gamma_x=F_\sigma(x-\cdot)\text{ in }\R^n\setminus B_\ve(x),\quad (-\Delta)^\frac{\sigma}{2} \Gamma_x\ge 0\text{ in }\R^n,$$ 
as done in \cite[Section 2.2]{Sil}, see \cite[Prop. 2.11]{Sil} in particular.  We now claim that
$$\Gamma_x- H_\sigma(x,\cdot)\in \tilde H^{\frac\sigma2,2}(\Omega).$$
Indeed the function $\Gamma_x-g_x$ lies in $C^1(\R^n)$ and vanishes outside $\Omega$. Then with the same computations used to prove \eqref{stimagx} one easily sees that $\Gamma_x-g_x\in W^{\frac\sigma2,2}(\R^n)=H^{\frac\sigma2,2}(\R^n)$ (see also Proposition \ref{HW}), so that 
$$\Gamma_x- H_\sigma(x,\cdot)= (\Gamma_x-g_x)- (H_\sigma(x,\cdot)-g_x) \in \tilde H^{\frac\sigma2,2}(\Omega),$$
as claimed. Then, since
$$ (-\Delta)^\frac{\sigma}{2} (\Gamma_x-H_\sigma(x,\cdot))\ge 0\text{ in }\Omega,\quad \Gamma_x-H_\sigma(x,\cdot)\equiv 0\text{ in }\R^n\setminus\Omega,$$
by the maximum principle (Proposition \ref{maxprinc2}) we have $\Gamma_x- H_\sigma(x,\cdot)\ge 0$ in $\Omega$, hence $G_\sigma(x,\cdot)\ge 0$ also in $\Omega\setminus B_\ve(x)$. This completes the proof of \eqref{G<F}.

To prove \eqref{eqgreenb}, let us start considering $u\in C^\infty_c(\Omega).$ Let $\delta_x$ denote the Dirac distribution in $x$. Then, using $u$ as test function in \eqref{eqgreen}, we get
$$u(x)=\langle \delta_x,u\rangle=\langle (-\Delta)^\frac{\sigma}{2} G_\sigma(x,\cdot),u\rangle:=\int_\Omega G_\sigma(x,y)(-\Delta)^\frac{\sigma}{2} u(y)dy.$$
Given now $u\in \tilde H^{\sigma,p}(\Omega)$, let $(u_k)_{k\in \mathbb{N}}\subset C^\infty_c(\Omega)$ converge to $u$ in $\tilde H^{\sigma,p}(\Omega)$, i.e.
$$u_k\to u,\quad (-\Delta)^\frac{\sigma}{2} u_k\to (-\Delta)^\frac{\sigma}{2} u\quad \text{in }L^p(\R^n), \text{ hence in }L^1(\Omega),$$
see Theorem \ref{lemmadens}. Then
$$u \overset{L^1(\Omega)}\longleftarrow u_k=\int_\Omega G_\sigma(\cdot ,y)(-\Delta)^\frac{\sigma}{2} u_k(y)dy\overset{L^1(\Omega)}\longrightarrow \int_\Omega G_\sigma(\cdot ,y)(-\Delta)^\frac{\sigma}{2} u(y)dy,$$
the convergence on the right following from \eqref{G<F} and Fubini's theorem:
\[
\begin{split}
\int_\Omega &\left|\int_\Omega G_\sigma(x,y)\left[(-\Delta)^\frac{\sigma}{2} u_k(y)- (-\Delta)^\frac{\sigma}{2} u(y)\right]dy \right|dx\\
&\le \int_\Omega \int_\Omega F_\sigma (x-y)\left|(-\Delta)^\frac{\sigma}{2} u_k(y)- (-\Delta)^\frac{\sigma}{2} u(y)\right| dx dy\\
&\le \sup_{y\in \Omega}\|F_\sigma(\cdot -y)\|_{L^1(\Omega)} \|(-\Delta)^\frac{\sigma}{2} u_k- (-\Delta)^\frac{\sigma}{2} u\|_{L^1(\Omega)}\to 0
\end{split}
\]
as $k\to \infty$, where we used that $\|F_\sigma(\cdot-y)\|_{L^1(\Omega)}\le C$ for a constant $C$ independent of $y$, as can be seen by writing
\[\begin{split}
\|F_\sigma(\cdot-y)\|_{L^1(\Omega)}&\le \|F_\sigma(\cdot-y)\|_{L^1(B_1(y))}+\|F_\sigma(\cdot-y)\|_{L^1(\Omega\setminus B_1(y))}\\
&\le C_{n,\sigma}+K_{n,\sigma}|\Omega|.
\end{split}\]
Since the convergence in $L^1$ implies the a.e. convergence (up to a subsequence), \eqref{eqgreenb} follows.

The case $\sigma=2$ is probably well known. The reader can easily prove it in a way similar to the case $\sigma\in (0,2)$, replacing Propositions \ref{trmexist} and \ref{maxprinc2} with the natural (local) analogs for $\sigma=2$. For instance the functional $\mathcal{B}_\sigma$ will be replaced by
$$\mathcal{B}_2(u,v):=\int_{\R^n}\nabla u\cdot \nabla v \,dx,\quad u,v\in H^{1,2}(\R^n).$$
To avoid confusion it might also be useful to notice that
\[\begin{split}
H^{1,2}_0(\Omega)&:=\{u\in L^2(\R^n):u\equiv 0\text{ in }\R^n\setminus \Omega, \nabla u\in L^2(\R^n)\}\\
&=\{u\in L^2(\R^n):u\equiv 0\text{ in }\R^n\setminus \Omega, (-\Delta)^\frac12 u\in L^2(\R^n)\}\\
&=:\tilde H^{1,2}(\Omega),
\end{split}\]
as can be seen via Fourier transform.
\end{proof}

Using the convolutions of several Green functions (an idea suggested by Ali Maalaoui) we can extend Proposition \ref{propgreen} to higher order $s>2$.

\begin{prop}\label{propgreen2} Let $\Omega\subset\R^n$ be open and have finite measure. Set $s=2k+\sigma<n$ with $k\in\mathbb{N}$, $\sigma\in (0,2]$, and define
$$G_s(x,y):=\int_\Omega G_2(x,y_1)\int_\Omega G_2(y_1,y_2)\dots\int_\Omega G_2(y_{k-1},y_k)\int_\Omega G_\sigma(y_k,y) dy_k\dots dy_1.$$
Then
\begin{equation}\label{G<Fbis}
0\le G_s(x,y)\le (\underbrace{F_2 * F_2*\dots *F_2}_{k\text{ times}}*F_\sigma) (x-y)= \frac{K_{n,s}}{|x-y|^{n-s}},
\end{equation}
where $*$ denotes the usual convolution in $\R^n$.
Moreover, if $u\in \tilde H^{s,p}(\Omega)$ for some $p\ge1$, it holds
\begin{equation}\label{greenrapr}
u(x)=\int_\Omega G_s(x,y)(-\Delta)^\frac{s}{2} u(y)dy, \quad \text{for a.e. }x\in\Omega.
\end{equation}
More generally \eqref{greenrapr} holds for functions $u:\R^n\to\R$ which can be approximated by a sequence $u_k\in C^\infty_c(\Omega)$ in the sense that $u_k\to u$ and $(-\Delta)^\frac{s}{2}u_k \to (-\Delta)^\frac{s}{2}u$ in $L^1(\Omega)$.
\end{prop}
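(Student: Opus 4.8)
The plan is to bootstrap everything from Proposition \ref{propgreen}, used both with the given exponent $\sigma\in(0,2]$ and with the exponent $2$, together with the classical Riesz composition formula and the same density argument already carried out there.

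\textbf{Step 1 (the bounds \eqref{G<Fbis}).} The lower bound is immediate: by \eqref{G<F} the kernels $G_2$ and $G_\sigma$ are nonnegative, hence so is the iterated integral defining $G_s$. For the upper bound I would insert $G_2(\cdot,\cdot)\le F_2(\cdot-\cdot)$ and $G_\sigma(\cdot,\cdot)\le F_\sigma(\cdot-\cdot)$ from \eqref{G<F}, and, since $F_2,F_\sigma\ge0$, enlarge each domain of integration from $\Omega$ to $\R^n$, obtaining
$$G_s(x,y)\le \int_{\R^n}F_2(x-y_1)\int_{\R^n}F_2(y_1-y_2)\cdots\int_{\R^n}F_\sigma(y_k-y)\,dy_k\cdots dy_1 = \big(\underbrace{F_2*\cdots*F_2}_{k\text{ times}}*F_\sigma\big)(x-y).$$
It then remains to identify this convolution with $F_s$. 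This is the Riesz composition formula $F_a*F_b=F_{a+b}$, valid whenever $a,b>0$ and $a+b<n$ (here all the exponents involved, namely $2j+\sigma\le 2k+\sigma=s$ for $j\le k$, are $<n$); equivalently one checks it by a one-line Fourier transform computation using $\mathcal F(F_a)(\xi)=c_n|\xi|^{-a}$ with $c_n$ independent of $a$, a consequence of Lemma \ref{green2}. Since $s<n$, the resulting $F_s(x-y)=K_{n,s}|x-y|^{s-n}$ is locally integrable, and \eqref{G<Fbis} follows with the stated constant.

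\textbf{Step 2 (representation for $u\in C^\infty_c(\Omega)$).} Since $-\Delta$ maps $C^\infty_c(\Omega)$ into itself, each iterate $(-\Delta)^ju$ lies in $C^\infty_c(\Omega)\subset\tilde H^{2,p}(\Omega)$ for every $p\ge1$; note that if $k\ge1$ then $s<n$ forces $n>2$, so Proposition \ref{propgreen} is indeed applicable with $\sigma=2$. Applying \eqref{eqgreenb} with $\sigma$ replaced by $2$ to $(-\Delta)^ju$ and using $(-\Delta)(-\Delta)^ju=(-\Delta)^{j+1}u$, I iterate $k$ times to get
$$u(x)=\int_\Omega\!\cdots\!\int_\Omega G_2(x,y_1)G_2(y_1,y_2)\cdots G_2(y_{k-1},y_k)\,[(-\Delta)^ku](y_k)\,dy_k\cdots dy_1.$$
Now $(-\Delta)^ku\in C^\infty_c(\Omega)\subset\tilde H^{\sigma,p}(\Omega)$, so \eqref{eqgreenb} with exponent $\sigma$ gives, for a.e.\ $y_k$,
$$[(-\Delta)^ku](y_k)=\int_\Omega G_\sigma(y_k,y)\,[(-\Delta)^\frac{\sigma}{2}(-\Delta)^ku](y)\,dy=\int_\Omega G_\sigma(y_k,y)\,[(-\Delta)^\frac{s}{2}u](y)\,dy,$$
where $(-\Delta)^\frac{\sigma}{2}(-\Delta)^ku=(-\Delta)^\frac{s}{2}u$ by taking Fourier transforms, since $|\xi|^{s}=|\xi|^\sigma|\xi|^{2k}$ and $C^\infty_c(\Omega)\subset\mathcal S(\R^n)$. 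Substituting this into the previous display and applying Tonelli's theorem — legitimate because $G_2(\cdot,\cdot)\le F_2(\cdot-\cdot)$, $G_\sigma(\cdot,\cdot)\le F_\sigma(\cdot-\cdot)$, the functions $F_2(z-\cdot)$ and $F_\sigma(z-\cdot)$ have $L^1(\Omega)$-norm bounded uniformly in $z$ (exactly as in the proof of Proposition \ref{propgreen}), and $(-\Delta)^\frac s2u\in L^1(\R^n)\cap L^\infty(\R^n)$ by the pointwise bound recalled in the Introduction — one may collapse the $(k+1)$-fold integral into $\int_\Omega G_s(x,y)(-\Delta)^\frac s2u(y)\,dy$, which is \eqref{greenrapr} for $u\in C^\infty_c(\Omega)$.

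\textbf{Step 3 (density) and the main point of care.} If $u$ is approximated by $u_m\in C^\infty_c(\Omega)$ with $u_m\to u$ and $(-\Delta)^\frac s2u_m\to(-\Delta)^\frac s2u$ in $L^1(\Omega)$ — in particular if $u\in\tilde H^{s,p}(\Omega)$, by Netrusov's theorem (Theorem \ref{lemmadens}) together with $|\Omega|<\infty$ — I would pass to the limit in the identity of Step 2 as at the end of the proof of Proposition \ref{propgreen}: $u_m\to u$ in $L^1(\Omega)$, while by \eqref{G<Fbis} and $\|F_s(\cdot-y)\|_{L^1(\Omega)}\le C_{n,s}+K_{n,s}|\Omega|$,
$$\int_\Omega\Big|\int_\Omega G_s(x,y)\big[(-\Delta)^\frac s2u_m(y)-(-\Delta)^\frac s2u(y)\big]\,dy\Big|\,dx\le\sup_{y\in\Omega}\|F_s(\cdot-y)\|_{L^1(\Omega)}\,\big\|(-\Delta)^\frac s2u_m-(-\Delta)^\frac s2u\big\|_{L^1(\Omega)}\to0,$$
so the right-hand sides converge in $L^1(\Omega)$ to $\int_\Omega G_s(\cdot,y)(-\Delta)^\frac s2u(y)\,dy$; passing to an a.e.\ convergent subsequence gives \eqref{greenrapr}. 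The analytically delicate work (existence of $G_\sigma$, the maximum principle, the sharp bound $G_\sigma\le F_\sigma$) has already been done in Proposition \ref{propgreen}, so no genuinely new obstacle arises; the points that most require care are the bookkeeping ones in Step 2 — verifying membership of each iterate $(-\Delta)^ju$ in the Bessel-potential space to which \eqref{eqgreenb} applies, justifying $(-\Delta)^{\sigma/2}(-\Delta)^k=(-\Delta)^{s/2}$, and tracking the repeated use of Tonelli/Fubini that turns the nested integral into a single integral against $G_s$.
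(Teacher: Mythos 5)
Your argument is correct and follows essentially the same route as the paper's proof: both use Proposition \ref{propgreen} with exponent $2$ and with exponent $\sigma$, iterate \eqref{eqgreenb} $k+1$ times on $C^\infty_c(\Omega)$, identify the resulting convolution with $F_s$ via the Riesz composition formula, and pass to the general case by the $L^1$-density/Fubini argument already used in Proposition \ref{propgreen}. The only cosmetic remark is that the theorem you invoke in Step 2 is Fubini (for a signed integrand, justified by the dominating integrable kernel), not Tonelli — which is also what the paper uses.
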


\begin{proof}
Using \eqref{G<F} we immediately infer \eqref{G<Fbis}, where the right-hand side can be computed explicitly using Lemma \ref{green2} 
and the formula
$$|\cdot|^{\alpha-n}*|\cdot|^{\beta-n}(x)=\frac{c_{n-\alpha-\beta}c_\alpha c_\beta}{c_{\alpha+\beta}c_{n-\alpha}c_{n-\beta}}|x|^{\alpha+\beta-n}\quad \text{with }c_\alpha:=\frac{\Gamma(\frac \alpha 2)}{\pi^{\alpha/2}},$$
which can be found for instance in \cite[page 134]{LL}.

To prove \eqref{greenrapr} consider first $u\in C^\infty_c(\Omega)$. Writing
$$(-\Delta)^\frac{s}{2} u=(-\Delta)^\frac{\sigma}{2}\circ \underbrace{(-\Delta)\circ \dots\circ(-\Delta)}_{k\textrm{ times}} u,$$
and using \eqref{eqgreenb} $k+1$ times one obtains
\[
\begin{split}
u(x)&=\int_\Omega G_2(x,y_1)(-\Delta)u(y_1)dy_1\\
&=\dots\\
&=\int_\Omega G_2(x,y_1)\int_\Omega G_2(y_1,y_2)\dots\int_\Omega G_\sigma(y_k,y) (-\Delta)^\frac{s}{2} u(y) dy\,dy_k\dots dy_1\\
&=\int_\Omega G_s(x,y)(-\Delta)^\frac{s}{2} u(y)dy,
\end{split}
\]
where in the last identity we used Fubini's theorem.

When $u$ is not smooth one can proceed by approximation, again using Fubini's theorem, exactly as in Proposition \ref{propgreen}.
\end{proof}

\begin{rmk}\label{navier} The Green function used in \eqref{greenrapr} is with respect to the Navier-type boundary condition
$$(-\Delta)^j u=0\text{ on } \de\Omega \text{ for }0\le j\le k-1,\quad (-\Delta)^ku\equiv 0\text{ in }\R^{n}\setminus \Omega. $$
\end{rmk}



\noindent \emph{Proof of Theorems \ref{MT2} and \ref{MT3}.} Let $u\in \tilde H^{\frac{n}{p},p}(\Omega)$.
Setting $f:=|(-\Delta)^\frac{n}{2p} u|\big|_{\Omega}\in L^p(\Omega)$ and using Proposition \ref{propgreen2}, we bound
\begin{equation}\label{urapr}
|u(x)| \le  \int_\Omega |G_\frac{n}{p}(x,y)|f(y)dy \le K_{n,\frac np}I_\frac{n}{p}f(x),
\end{equation}
where $I_{\frac np}$ is defined as in Theorem \ref{trmadams}.
Then, assuming that $\|f\|_{L^p(\Omega)}\le 1$ we can apply Theorem \ref{trmadams} and get
$$\int_\Omega e^{\alpha_{n,p}|u|^{p'}}dx\le \int_\Omega e^{\frac{n}{\omega_{n-1}}|I_\frac{n}{p}f|^{p'}}dx\le c_{n,p}|\Omega|.$$
Theorem \ref{MT3} follows analogously applying Theorem \ref{XZ}. One only needs to notice that every $u\in \tilde H^{\frac np,(p,q)}(\Omega)$ can be approximated (by definition, see paragraph before \eqref{defHspq}) by functions $u_k\in C^\infty_c(\Omega)$ which satisfy $u_k\to u$ and $(-\Delta)^\frac{n}{2p} u_k\to (-\Delta)^\frac{n}{2p}u$ in $L^{(p,q)}(\Omega)$, hence also in $L^1(\Omega)$, since $L^{(p,q)}(\Omega)$ embeds continuously into $L^1(\Omega)$ when $\Omega$ has finite measure and $p>1$. Therefore Proposition \ref{propgreen2} can be applied. 
For instance for the case $q=\infty$, still using \eqref{urapr}, and assuming that $\|f\|_{L^{(p,\infty)}}\le 1$, we bound for $\beta<\beta_{n,p,\infty}$
\[\begin{split}
\int_\Omega e^{\beta|u|}dx&\le\int_\Omega e^{\frac{\beta}{\beta_{n,p,\infty}}\gamma_{n,p,\infty} |I_\frac np f|}dx\\
&\le \frac{d_{n,p}|\Omega|}{\gamma_{n,p,\infty}-\frac{\beta}{\beta_{n,p,\infty}}\gamma_{n,p,\infty}}\\
&=\frac{K_{n,\frac{n}{p}}^{-1}d_{n,p}|\Omega|}{\beta_{n,p,\infty}-\beta}.
\end{split}\]
The cases $q=1$ and $q\in (1,\infty)$ are very similar.

The sharpness of the constants $\beta_{n,p,q}$ for $q\in (1,\infty]$ (this includes $\alpha_{n,p}=\beta_{n,p,p}$) follows from Proposition \ref{sharp} below. Indeed, up to a translation and rescaling we can assume that $B_1\subset\Omega$. Then $C^\infty_c(B_1)\subset \tilde H^{\frac np,(p,q)}(\Omega)$, and Proposition \ref{sharp} gives the desired conclusion.
\hfill $\square$


\medskip

\begin{prop}\label{sharp}
Let $p\in (1,\infty)$, $q\in (1,\infty]$. Then 
\begin{equation}\label{supinf}
 \sup_{u\in C^\infty_c(B_1): \|(-\Delta)^\frac{n}{2p}u\|_{L^{(p,q)}(\R^n)}\le 1} \int_{B_1} e^{\beta |u|^{q'}} dx= \infty
\end{equation}
for any $\beta>\beta_{n,p,q}$, where $\beta_{n,p,q}=(\alpha_{n,p})^\frac{q'}{p'}$ is as in Theorem \ref{MT3}.
\end{prop}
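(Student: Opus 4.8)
The plan is to prove the sharpness of $\beta_{n,p,q}$ by exhibiting an explicit family of test functions, using the cut-off strategy attributed to A. Schikorra, whose exponential integrals blow up as soon as $\beta > \beta_{n,p,q}$. The natural starting point is the fundamental solution $F_{\frac np}(x) = K_{n,\frac np}|x|^{\frac np - n}$ of $(-\Delta)^{\frac{n}{2p}}$, since $I_{\frac np}$ applied to a concentrating $L^{(p,q)}$-normalized bump essentially reproduces (a multiple of) $\log\frac{1}{|x|}$, the profile that saturates Adams' and Xiao--Zhai's inequalities. Concretely, for small $\varepsilon > 0$ I would take $u_\varepsilon$ to be a truncated and regularized logarithm: something like $u_\varepsilon(x) = c_\varepsilon \,\eta(x)\, \min\bigl(\log\tfrac{1}{\varepsilon},\log\tfrac{1}{|x|}\bigr)$ where $\eta \in C^\infty_c(B_1)$ is a fixed cut-off equal to $1$ near the origin, and $c_\varepsilon$ is chosen so that $\|(-\Delta)^{\frac{n}{2p}}u_\varepsilon\|_{L^{(p,q)}(\R^n)} \le 1$. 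On $B_\varepsilon$ one has $u_\varepsilon \approx c_\varepsilon \log\frac1\varepsilon$, so $\int_{B_\varepsilon} e^{\beta |u_\varepsilon|^{q'}}\,dx \gtrsim \varepsilon^n e^{\beta (c_\varepsilon \log\frac1\varepsilon)^{q'}}$, and the whole point is to check that with the optimal choice of normalization the exponent $\beta (c_\varepsilon \log\frac1\varepsilon)^{q'} - n\log\frac1\varepsilon$ tends to $+\infty$ precisely when $\beta > \beta_{n,p,q}$.

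The key computation is therefore the asymptotics of $\|(-\Delta)^{\frac{n}{2p}}u_\varepsilon\|_{L^{(p,q)}(\R^n)}$ as $\varepsilon \to 0$. For the pure (uncut) logarithm $v_\varepsilon(x) = \min(\log\frac1\varepsilon, \log\frac1{|x|})$ one expects $(-\Delta)^{\frac{n}{2p}}v_\varepsilon$ to behave, modulo lower-order terms, like a constant times $|x|^{-\frac np}$ on the annulus $B_1 \setminus B_\varepsilon$, whose $L^{(p,q)}$ (quasi-)norm over that annulus is computable and grows like $(\log\frac1\varepsilon)^{1/q}$ (this is exactly where the Lorentz exponent $q$, rather than $p$, enters and produces the exponent $q'/p'$ in $\beta_{n,p,q}$). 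Matching constants: one wants $c_\varepsilon \sim \bigl(\log\frac1\varepsilon\bigr)^{-1/q}$ times the appropriate power of $\frac{\omega_{n-1}}{n}$ and $K_{n,\frac np}$, so that $c_\varepsilon \log\frac1\varepsilon \sim \bigl(\log\frac1\varepsilon\bigr)^{1/q'}$ times that constant, giving $\beta(c_\varepsilon\log\frac1\varepsilon)^{q'} \sim \beta \,\beta_{n,p,q}^{-1}\, n \log\frac1\varepsilon$ — indeed larger than $n\log\frac1\varepsilon$ iff $\beta > \beta_{n,p,q}$. I would organize this as: (i) write down $u_\varepsilon$ precisely; (ii) estimate $(-\Delta)^{\frac{n}{2p}}u_\varepsilon$ and its $L^{(p,q)}$ norm, isolating the leading term coming from the near-origin region and showing the cut-off $\eta$ and the truncation contribute only $O(1)$ or lower-order corrections; (iii) normalize; (iv) bound $\int_{B_\varepsilon}e^{\beta|u_\varepsilon|^{q'}}$ from below and let $\varepsilon \to 0$. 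For $q = \infty$ (so $q' = 1$) the argument degenerates pleasantly: the $L^{(p,\infty)}$ norm of $|x|^{-\frac np}$ on $B_1$ is already finite and of order $1$, independent of $\varepsilon$, so no logarithmic normalization is needed and one directly gets the claimed threshold.

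The main obstacle — and the reason the paper only proves sharpness of the constant and not the stronger statement \eqref{stimaMT2c} — is controlling the fractional derivative $(-\Delta)^{\frac{n}{2p}}(\eta v_\varepsilon)$ of the cut-off product in the Lorentz (quasi-)norm with enough precision. Unlike in the integer-order or one-dimensional Green-function constructions, here $(-\Delta)^{\frac{n}{2p}}$ is nonlocal of possibly high order, so $(-\Delta)^{\frac{n}{2p}}(\eta v_\varepsilon) \ne \eta (-\Delta)^{\frac{n}{2p}} v_\varepsilon$ and the commutator terms, together with the error from smoothing the corner of $v_\varepsilon$ at scale $\varepsilon$, must be shown to be negligible against the leading $(\log\frac1\varepsilon)^{1/q}$ term. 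I expect this to require a careful but routine kernel estimate: split $(-\Delta)^{\frac{n}{2p}} u_\varepsilon$ into the contribution of the singular region, the annular region where it is $\approx |x|^{-n/p}$, and the cut-off/regularization region, and bound the Lorentz norm of each piece using the definition of $L^{(p,q)}$ via the decreasing rearrangement (or equivalently the $K$-functional). Once these error estimates are in place the blow-up computation is immediate. An alternative, cleaner route that sidesteps some commutator issues is to define $u_\varepsilon$ directly as $I_{\frac np} f_\varepsilon$ restricted suitably and then cut off, mimicking the extremal sequence in Adams' sharpness proof; but since $I_{\frac np}f$ need not be supported in $\Omega$, one still has to multiply by $\eta$ and estimate $(-\Delta)^{\frac{n}{2p}}(\eta\, I_{\frac np}f_\varepsilon)$, so the analytic difficulty is essentially the same.
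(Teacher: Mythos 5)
Your plan is a genuinely different route from the paper's. You propose the classical \emph{direct construction}: build an explicit extremal family $u_\varepsilon = c_\varepsilon\,\eta\,\min(\log\frac1\varepsilon,\log\frac1{|x|})$, normalize it in $L^{(p,q)}$, and check that $\int e^{\beta|u_\varepsilon|^{q'}}\to\infty$. The paper instead argues \emph{by contradiction and reduction}: it assumes the supremum in \eqref{supinf} is finite for some $\beta>\beta_{n,p,q}$, takes an \emph{arbitrary} $f\in C^\infty_c(B_\rho)$ with $\|f\|_{L^{(p,q)}}\le1$, sets $\tilde u=K_{n,\frac np}\theta I_{\frac np}f$, and shows via Lemma \ref{green2} and the disjoint-support estimate (Proposition \ref{propdisj}) that $\|(-\Delta)^{\frac{n}{2p}}\tilde u\|_{L^{(p,q)}}\le 1+\bar C\rho^{n/p'}$; after dividing by $1+\eps$ and choosing $\rho$ small, this produces a uniform bound on $\int_{B_\rho}e^{\tilde\beta K^{q'}|I_{\frac np}f|^{q'}}dx$ with $\tilde\beta K^{q'}>\gamma_{n,p,q}$, contradicting the sharpness half of Theorems \ref{trmadams}/\ref{XZ}. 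The decisive structural difference is that the paper never needs any $\varepsilon$-asymptotics: the cut-off error is bounded by a \emph{single, $\varepsilon$-independent} constant $\bar C\rho^{n/p'}$ valid for all admissible $f$, and the $1/(1+\eps)$ normalization absorbs it. Your approach, by contrast, requires showing that the cut-off/commutator/corner error is $o\bigl((\log\tfrac1\varepsilon)^{1/q}\bigr)$ in $L^{(p,q)}$ for the \emph{specific} extremal family — a genuinely finer statement.

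That is exactly where the gap in your proposal lies. You describe the commutator estimate $(-\Delta)^{\frac{n}{2p}}(\eta v_\varepsilon)-\eta(-\Delta)^{\frac{n}{2p}}v_\varepsilon$ plus the corner-smoothing error as a ``careful but routine kernel estimate'' and leave it at that. But for $\frac np$ non-integer and possibly large, $(-\Delta)^{\frac{n}{2p}}$ is a nonlocal operator of high order, and controlling its action on a truncated logarithm down to $L^{(p,q)}$-precision is precisely the difficulty the paper is designed to avoid — indeed the paper explicitly says it cannot prove the stronger sharpness statement \eqref{stimaMT2c} for this very reason. Until you actually carry out step (ii) — splitting $(-\Delta)^{\frac{n}{2p}}u_\varepsilon$ into near-origin, annular, and cut-off pieces and bounding each in Lorentz norm with the required precision — the blow-up in step (iv) is not established. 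In practice you would end up re-proving something like Proposition \ref{propdisj} anyway (for the cut-off piece) plus additional estimates for the corner regularization; at that point the paper's contradiction argument is strictly less work and more robust, since it transfers the sharpness directly from Adams/Xiao--Zhai and needs no asymptotics at all.
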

\begin{proof} 
We argue by contradiction. Fix $\beta > \beta_{n,p,q}$ and assume that the supremum in \eqref{supinf} is finite.
For some $\rho\in (0,\frac18]$ to be fixed later and for a cut-off function $\theta\in C^\infty_c(B_1)$ with $\theta\equiv 1$ in $B_\frac12$, consider an arbitrary function $f \in C^\infty_c(B_\rho)$ with $\vrac{f}_{L^{(p,q)}(B_\rho)} \leq 1$ and set
$$\tilde{u} :=K_{n,\frac np} \theta I_{\frac np} f,\quad u := \frac{\tilde{u}}{1+\eps},$$
for an $\eps$ such that
$$\tilde \beta := \frac{\beta}{(1+\eps)^{q'}} > \beta_{n,p,q}.$$
With the help of Lemma \ref{green2} we now write
\[\begin{split}
(-\Delta)^\frac{n}{2p} \tilde{u} &= (-\Delta)^\frac{n}{2p}(F_\frac{n}{p}* f- K_{n,\frac np}(1-\theta)I_{\frac np} f)\\
&= f - K_{n,\frac np}(-\Delta)^\frac{n}{2p} ((1-\theta)I_{\frac np} f),
\end{split}
\]
and with Proposition \ref{propdisj} below we bound
\[
 \|K_{n,\frac np} (-\Delta)^\frac{n}{2p} ((1-\theta)I_{\frac np} f)\|_{L^{(p,q)}(\R^n)} \leq \bar C \rho^{\frac n{p'}} \vrac{f}_{L^{(p,q)}(B_\rho)}.
\]
Choose now $\rho\in (0,\frac14]$ such that
$$\frac{1+\bar C\rho^{\frac{n}{p'}}}{1+\eps}\le 1.$$
Then with the triangle inequality we get
$$\|(-\Delta)^\frac{n}{2p} u\|_{L^{(p,q)}(\R^n)} \leq 1.$$
Thus,
\[
 \int_{B_\rho} e^{\tilde \beta K_{n,\frac np}^{q'}|I_{\frac np}f|^{q'}}dx \leq \int_{B_1} e^{\tilde \beta K_{n,\frac np}^{q'} |\theta I_{\frac np}f|^{q'}} dx =\int_{B_1} e^{\beta |u|^{q'}}dx,
 \]
 and since $\tilde \beta K_{n,\frac np}^{q'}> \left(\frac{n}{\omega_{n-1}}\right)^\frac{q'}{p'}$ this is a contradiction to Theorem \ref{trmadams} when $q=p$ and Theorem \ref{XZ} in general.
\end{proof}

\begin{prop}\label{propdisj} Let $\theta \in C^\infty_c(B_1)$ with $\theta\equiv 1$ in $B_\frac12$. Given $t\ge 0$, $s\in (0,n)$, $\rho\in (0,\tfrac18]$, $p\in (1,\infty)$ and $q\in (1,\infty]$, we have 
\begin{equation}\label{stimadisj}
\|(-\Delta)^\frac t2 ((1-\theta) I_s f)\|_{L^{(p,q)}(\R^n)} \le C \rho^\frac{n}{p'}\|f\|_{L^{(p,q)}(B_\rho)},\quad \text{for every } f\in C^\infty_c(B_\rho),
\end{equation}
for a constant $C$ not depending on $f$.
\end{prop}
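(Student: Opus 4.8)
The plan is to exploit the disjointness of supports: $1-\theta$ vanishes on $B_{1/2}$ while $f$ is supported in $B_\rho\subset B_{1/8}$, so for $x\in\operatorname{supp}(1-\theta)$ and $y\in\operatorname{supp}f$ we have $|x-y|\geq |x|-\rho\gtrsim 1$ bounded away from $0$. First I would write $(1-\theta)I_sf = (1-\theta)(F_s * f) = K_{n,s}(1-\theta)(\,|\cdot|^{s-n}*f\,)$ using Lemma \ref{green2}, and observe that on the support of $1-\theta$ the function $I_sf$ is smooth with all derivatives controlled: differentiating under the integral sign, $|\nabla^j I_sf(x)| \le C_j\int_{B_\rho}|x-y|^{s-n-j}|f(y)|\,dy$, and since $|x-y|\ge 1/4$ there (say), the kernel is bounded, giving $|\nabla^j I_sf(x)|\le C_j \|f\|_{L^1(B_\rho)}$ for $x$ away from $B_{1/2}$. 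Moreover $I_sf$ and its derivatives decay like $|x|^{s-n-j}$ at infinity, so $(1-\theta)I_sf$ is a Schwartz-type function (smooth, with polynomially decaying derivatives) whose $C^m$ norms are all bounded by $C_m\|f\|_{L^1(B_\rho)}$.

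Next I would estimate $\|f\|_{L^1(B_\rho)}$ in terms of $\|f\|_{L^{(p,q)}(B_\rho)}$: by Hölder's inequality in Lorentz spaces (or simply $L^{(p,q)}(B_\rho)\hookrightarrow L^1(B_\rho)$ with the constant coming from $\|\mathbf{1}_{B_\rho}\|_{L^{(p',q')}}$), one gets $\|f\|_{L^1(B_\rho)}\le C|B_\rho|^{1/p'}\|f\|_{L^{(p,q)}(B_\rho)} = C'\rho^{n/p'}\|f\|_{L^{(p,q)}(B_\rho)}$, which produces exactly the factor $\rho^{n/p'}$ claimed. So it remains to show that $\|(-\Delta)^{t/2}g\|_{L^{(p,q)}(\R^n)}\le C\,(\text{finitely many }C^m\text{-norms of }g)$ for $g=(1-\theta)I_sf$, uniformly. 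For this I would use the pointwise representation of the fractional Laplacian: for a fixed large $N$, split $(-\Delta)^{t/2}g(x)$ into a near-field part, controlled by $\|g\|_{C^{\lceil t\rceil +1}}$ times an integrable kernel, and a far-field part, controlled by $\sup(1+|y|^{n+t})|g(y)|$ — using the polynomial decay of $g$ and its derivatives established above — to conclude $|(-\Delta)^{t/2}g(x)|\le C(1+|x|)^{-n-t+\text{something}}\|f\|_{L^1(B_\rho)}$, or at least $|(-\Delta)^{t/2}g(x)|\le C(1+|x|)^{-n-\delta}\|f\|_{L^1(B_\rho)}$ for some $\delta>0$; such a function lies in $L^{(p,q)}(\R^n)$ with norm bounded by a constant, since $p>1$. (When $t$ is an even integer this is elementary: $(-\Delta)^{t/2}g$ is just a derivative of $g$, again with polynomial decay.)

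The main obstacle I anticipate is handling the nonlocality of $(-\Delta)^{t/2}$ cleanly for non-integer $t$, in particular checking the far-field decay of $(-\Delta)^{t/2}g$ carefully enough that the result lies in $L^{(p,q)}(\R^n)$ and not merely in some larger space — one must track that $g$ decays like $|x|^{s-n}$ (and its derivatives faster), and that after applying $(-\Delta)^{t/2}$ one still has decay strictly faster than $|x|^{-n/p}$ at infinity, which is where $p>1$ is used. A secondary technical point is that $s-n$ could be anywhere in $(-n,0)$, so the near-field singularity of the kernel $|x-y|^{s-n}$ in the definition of $I_sf$ must be kept integrable; this is automatic since $s<n$. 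Once the pointwise bound $|(-\Delta)^{t/2}g(x)|\le C(1+|x|)^{-n-\delta}\|f\|_{L^1(B_\rho)}$ is in hand, combining it with the Lorentz-space embedding estimate for $\|f\|_{L^1(B_\rho)}$ yields \eqref{stimadisj} with a constant $C$ depending only on $n,s,t,p,q$ and the fixed cutoff $\theta$.
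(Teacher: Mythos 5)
Your plan differs structurally from the paper's. The paper argues by duality: it reduces the claim to showing
$\|I_s((1-\theta)(-\Delta)^{t/2}g)\|_{L^\infty(B_{1/8})} \le C\,\|g\|_{L^{(p',q')}(\R^n)}$ for every $g\in\mathcal{S}(\R^n)$, then rewrites the left-hand side as
$\int_{\R^n}\tilde k(x,y)g(y)\,dy$ with $\tilde k(x,y)=(-\Delta_y)^{t/2}\bigl(\theta_1(y)\theta_2(x)|x-y|^{s-n}\bigr)$, where $\theta_1=1-\theta$ and $\theta_2$ is a second cutoff $\equiv 1$ on $B_{1/8}$ with support disjoint from that of $\theta_1$, and finishes by applying H\"older in Lorentz spaces to the kernel $\tilde k(x,\cdot)$. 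The factor $\rho^{n/p'}$ appears in your version from $\|f\|_{L^1(B_\rho)}\le C\rho^{n/p'}\|f\|_{L^{(p,q)}(B_\rho)}$, in the paper from $\|1\|_{L^{(p',q')}(B_\rho)}\le C\rho^{n/p'}$; these are the same computation in dual guises. Both arguments rest on the disjointness of supports, so they are cousins, but yours attacks $(-\Delta)^{t/2}\bigl((1-\theta)I_sf\bigr)$ head-on while the paper first dualizes.

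There is, however, a concrete gap in your far-field step. You propose to control the far-field part of the singular-integral representation of $(-\Delta)^{t/2}g$, $g=(1-\theta)I_sf$, by $\sup_y(1+|y|^{n+t})|g(y)|$. But for $|y|>1$ one has $g=I_sf$, which decays only like $|y|^{s-n}$ when $\int f\ne 0$ (a case you cannot exclude), so $(1+|y|^{n+t})|g(y)|\sim |y|^{s+t}\to\infty$: that supremum is $+\infty$, and the bound as stated is vacuous. The correct far-field analysis has to exploit the near-homogeneity of $g$ at infinity together with the cancellation in $g(x)-g(y)$ via a Taylor expansion of $g$ around $x$; this yields in general only $|(-\Delta)^{t/2}g(x)|\le C|x|^{s-n-t}$ for large $|x|$, which lies in $L^{(p,q)}$ precisely when $s<t+n/p'$. (In the only application made in the paper one has $t=s=n/p$, so this holds trivially, and there is in fact an extra cancellation --- $(-\Delta)^{s/2}|z|^{s-n}$ is a multiple of $\delta_0$ --- upgrading the decay to $|x|^{-n-t}$.) So the step you flagged as the ``main obstacle'' is indeed where the argument must be repaired: your proposed bound on the far-field does not merely need to be ``checked carefully,'' it fails and must be replaced by the Taylor/near-homogeneity argument. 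Note that the paper's proof runs into the same decay question for $\tilde k(x,\cdot)$, but delegates it to a cited lemma and works only with a Schwartz test function $g$, which keeps the bookkeeping cleaner.
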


\begin{proof} 
By duality it suffices to prove 
\begin{equation}\label{stimadisj}
\|I_s((1-\theta)(-\Delta)^\frac{t}{2} g)\|_{L^{(p',q')}(B_\rho)}\le C \rho^\frac{n}{p'}\|g\|_{L^{(p',q')}(\R^n)},\quad \text{for every }g\in \mathcal{S}.
\end{equation}
This in turn follows from the estimate
\begin{equation}\label{stimadisj2}
\|I_s((1-\theta)(-\Delta)^\frac{t}{2} g)\|_{L^\infty(B_\frac 18)}\le C \|g\|_{L^{(p',q')}(\R^n)}, \quad \text{for every }g\in \mathcal{S},
\end{equation}
since by H\"older's inequality in Lorentz spaces (see \cite{one}) we have
\[\begin{split}
\|I_s((1-\theta)(-\Delta)^\frac{t}{2} g)\|_{L^{(p',q')}(B_\rho)}& \le \|1\|_{L^{(p',q')}(B_\rho)} \|I_s((1-\theta)(-\Delta)^\frac{t}{2} g)\|_{L^\infty(B_\frac 18)}\\ 
&\le C \rho^\frac{n}{p'}  \|I_s((1-\theta)(-\Delta)^\frac{t}{2} g)\|_{L^\infty(B_\frac 18)}.\end{split}\]
Estimate \eqref{stimadisj2} in turn follows as in Lemma 3.6 of \cite{MMS}, with minor modifications, as we shall now see.
Set $\theta_1:=(1-\theta)$, fix $\theta_2\in C^\infty_c(B_\frac14)$ with $\theta_2\equiv 1$ in $B_\frac18$. In particular the supports of $\theta_1$ and $\theta_2$ are disjoint with distance at least $\frac 18$.
Consider
$$k(x,y):=\frac{\theta_1(y)\theta_2(x)}{|x-y|^{n-s}},$$
which is smooth thanks to the disjointness of the supports of $\theta_1$ and $\theta_2$.
Then we can write for $x\in B_\frac18$ and $g\in\mathcal{S}(\R^n)$
\[\begin{split}
I_s((1-\theta)(-\Delta)^\frac{t}{2} g)(x)& = \theta_2(x)I_s(\theta_1(-\Delta)^\frac{t}{2} g)(x)\\
&=\int_{\R^n} k(x,y) (-\Delta)^\frac{t}{2}g(y)dy\\
&= \int_{\R^n} (-\Delta_y)^\frac{t}{2}k(x,y) g(y)dy\\
&= \int_{\R^n} \tilde k(x,y) g(y)dy,
\end{split}
\]
where $\tilde k(x,y):=(-\Delta_y)^\frac{t}{2}k(x,y)$ is smooth and decays like $|y|^{-n-t}$ as $y\to\infty$ uniformly with respect to $x\in B_\frac18$ (see e.g. Lemma 3.5 in \cite{MMS}).
In particular from H\"older's inequality (see \cite[Theorem 3.4]{one}) we get
\[\begin{split}
\sup_{x\in B_\frac18} \int_{\R^n} \tilde k(x,y) g(y)dy&\le \sup_{x\in B_\frac18} \|\tilde k(x,\cdot)\|_{L^{(p,q)}(\R^n)} \|g\|_{L^{(p',q')}(\R^n)}\\
&\le C \|g\|_{L^{(p',q')}(\R^n)}
\end{split}\]
and \eqref{stimadisj2} follows.
\end{proof}

\section{Some applications}\label{sec:3}

A consequence of Theorem \ref{MT2} is the existence of conformal metrics on $\Omega\subset\R^n$ with prescribed $Q$-curvature $K$ i.e. solutions to  
the equation
\begin{equation}\label{eqK}
(-\Delta)^\frac{n}{2} u=Ke^{nu}\quad \text{in }\Omega,
\end{equation}
particularly in the case $n$ odd.

Here the set $\Omega$ is required to have finite measure. For existence results for \eqref{eqK} in the case $\Omega=\R^n$, $n$ odd, we refer to \cite{hyd} and \cite{JMMX}, when $n$ is even we refer to \cite{CC}, \cite{CK}, \cite{H-M}, \cite{mar0}, \cite{W-Y}.

\begin{trm}\label{trmexK} Let $\Omega\subset\R^n$ be open and have finite measure. For any  $K\in L^p(\Omega)$, $p>1$, there is a solution $u\in \tilde H^{\frac n2,2}(\Omega)+\R$ to \eqref{eqK}.
\end{trm}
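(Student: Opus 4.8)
The plan is to solve \eqref{eqK} by a variational method applied to the auxiliary equation obtained after subtracting the average of $K e^{nu}$. Write $n = 2k+\sigma$ with $k\in\mathbb N$, $\sigma\in(0,2]$, and recall from Proposition \ref{propgreen2} that $(-\Delta)^{\frac n2}$ on $\tilde H^{\frac n2,2}(\Omega)$ (with the Navier boundary conditions of Remark \ref{navier}) admits the Green function $G_n(x,y)$ satisfying $0\le G_n(x,y)\le K_{n,n}|x-y|^{0}$ — wait, for $s=n$ the kernel is borderline, so more honestly one works with $G_n$ and the representation $v(x)=\int_\Omega G_n(x,y)(-\Delta)^{\frac n2}v(y)\,dy$ for $v\in\tilde H^{\frac n2,2}(\Omega)$, together with the fact (from Theorem \ref{MT2}, applied with $p=2$, $\frac np = \frac n2$) that $v\mapsto \int_\Omega e^{n|v|}dx$ is finite and indeed that the Moser--Trudinger functional is bounded on bounded subsets of $\tilde H^{\frac n2,2}(\Omega)$. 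The constant $\R$ is added because we only prescribe $(-\Delta)^{\frac n2}u$, which kills constants; so we look for $u = v + c$ with $v\in\tilde H^{\frac n2,2}(\Omega)$ and $c\in\R$ a Lagrange multiplier fixed at the end.

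Concretely, first I would reduce to finding $v\in\tilde H^{\frac n2,2}(\Omega)$ solving
\begin{equation*}
(-\Delta)^{\frac n2} v = K e^{nv}\quad\text{in }\Omega,\qquad v\equiv 0\text{ (Navier) outside }\Omega,
\end{equation*}
and afterwards absorb any normalizing constant into $u=v+c$; since $\Omega$ has finite measure and $K\in L^p$, one checks a posteriori that $Ke^{nv}\in L^1(\Omega)$ using Theorem \ref{MT2} and Hölder (with $p'<\infty$), so the right-hand side makes sense. The natural functional is
\begin{equation*}
J(v) := \frac1n\|(-\Delta)^{\frac n4}v\|_{L^2(\R^n)}^2 - \int_\Omega K e^{nv}\,dx,\qquad v\in\tilde H^{\frac n2,2}(\Omega),
\end{equation*}
whose critical points are exactly the weak solutions. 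Here the key analytic input is Theorem \ref{MT2}: for $v$ in a ball $\{\|(-\Delta)^{\frac n4}v\|_{L^2}\le M\}$ one has $\int_\Omega e^{n|v|}\le \int_\Omega e^{\alpha_{n,2}M^{2}(|v|/M)^{2}\cdot(\text{stuff})}$ — more carefully, by Young's inequality $n|v|\le \frac{\alpha_{n,2}}{M^{2}}|v|^{2}\cdot(\dots)$ so $\int_\Omega e^{n|v|}\le C(M)|\Omega|<\infty$, and moreover (by the same token combined with Hölder for $K\in L^p$) $v\mapsto\int_\Omega Ke^{nv}$ is finite, continuous and weakly continuous on $\tilde H^{\frac n2,2}(\Omega)$, because bounded sequences converge strongly in every $L^q$ (again by Theorem \ref{MT2}, or by compactness of the embedding for bounded $\Omega$; for unbounded $\Omega$ of finite measure one needs a bit more care, but the exponential integrability still gives equiintegrability of $e^{nv_j}$ and hence $L^1$-convergence of $Ke^{nv_j}$).

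The main steps, in order: (1) show $J$ is well-defined, $C^1$ on $\tilde H^{\frac n2,2}(\Omega)$, with derivative $\langle J'(v),\varphi\rangle = \frac2n\langle(-\Delta)^{\frac n4}v,(-\Delta)^{\frac n4}\varphi\rangle - n\int_\Omega Ke^{nv}\varphi$, so that critical points solve the equation weakly, and then (via the Green representation and elliptic regularity for the fractional Laplacian) are genuine solutions; (2) establish coercivity of $J$: this is where Theorem \ref{MT2} is used quantitatively, since $\int_\Omega Ke^{nv}\le \|K\|_{L^p}\big(\int_\Omega e^{np'v}\big)^{1/p'}\le \|K\|_{L^p}\,C\,|\Omega|^{1/p'}\exp\big(\text{(small)}\|(-\Delta)^{\frac n4}v\|^2\big)$ grows \emph{slower} than $\|(-\Delta)^{\frac n4}v\|^2$ only if we can insert a factor $<1$ in the exponent — and we can, because $\alpha_{n,2}$ can be undercut: writing $np'|v| \le \eps\,\alpha_{n,2}\frac{|v|^2}{\|(-\Delta)^{\frac n4}v\|^2} + C_\eps \|(-\Delta)^{\frac n4}v\|^2$ is the wrong split, so instead one uses the subcritical bound $\int_\Omega e^{\beta|v|^2/\|(-\Delta)^{\frac n4}v\|^2}\le C|\Omega|$ for $\beta\le\alpha_{n,2}$ together with $np'|v|\le \delta |v|^2/\|(-\Delta)^{\frac n4}v\|^2 + \frac{(np')^2}{4\delta}\|(-\Delta)^{\frac n4}v\|^2$ with $\delta<\alpha_{n,2}$, giving $\int_\Omega Ke^{nv}\le C_\delta\exp\big(\tfrac{(np')^2}{4\delta}\|(-\Delta)^{\frac n4}v\|^2\big)$ — this is \emph{exponential}, not sub-quadratic, so pure coercivity fails and one must minimize under a constraint $\|(-\Delta)^{\frac n4}v\|_{L^2}^2 \le \Lambda$ (or on a sphere), obtaining a minimizer $v_0$, and then kill the multiplier by adding a constant to $v_0$; (3) lower semicontinuity: the Dirichlet term is weakly l.s.c., and the exponential term is weakly continuous by the equiintegrability argument above, so a minimizing sequence (bounded by the constraint) has a weakly convergent subsequence whose limit attains the infimum; (4) Lagrange multiplier / removing the constant: the constrained minimizer satisfies $(-\Delta)^{\frac n2}v_0 = \mu Ke^{nv_0}$ for some $\mu>0$ (sign from testing with $v_0$ and the fact that the constraint is active, which one arranges by choosing $\Lambda$ appropriately so that the minimizer is nontrivial), and then $u := v_0 + \frac1n\log\mu \in \tilde H^{\frac n2,2}(\Omega)+\R$ solves \eqref{eqK}; (5) regularity: bootstrap from $Ke^{nu}\in L^1\cap(\text{a bit better via Theorem \ref{MT2}})$ using the mapping properties of $I_{n/2}$ to get $u$ as regular as $K$ permits.

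The hard part will be step (2)–(4): because the nonlinearity grows like $e^{nv}$ and the sharp exponent $\alpha_{n,2}$ is exactly at the threshold, the unconstrained functional $J$ is neither bounded below nor coercive, so the argument genuinely needs a constrained minimization and a careful choice of the constraint level $\Lambda$ guaranteeing that (i) the infimum is attained in the interior or on the sphere with the right sign of the multiplier, and (ii) the minimizer is not identically a constant outside $\Omega$, i.e.\ $v_0\not\equiv 0$, which forces $\mu>0$ and lets us take the logarithm. A secondary subtlety, since $\Omega$ need not be bounded, is justifying the weak continuity of $v\mapsto\int_\Omega Ke^{nv}$; this is handled by noting that Theorem \ref{MT2} gives a \emph{uniform} bound on $\int_\Omega e^{n p'' v}$ for some $p''>p'$ along the (norm-bounded) minimizing sequence, hence the family $\{Ke^{nv_j}\}$ is equiintegrable in $L^1(\Omega)$, and combined with a.e.\ convergence (after extracting a subsequence, using that bounded sequences in $\tilde H^{\frac n2,2}$ converge in $L^1_{\loc}$ and pass to a further a.e.-convergent subsequence) Vitali's theorem yields $L^1$-convergence of the nonlinear term.
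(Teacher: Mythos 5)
Your proposal goes by a genuinely different route than the paper. You set up the natural Euler--Lagrange functional $J(v)=\tfrac1n\|(-\Delta)^{\frac n4}v\|_{L^2}^2-\int_\Omega Ke^{nv}\,dx$, correctly observe that the Moser--Trudinger inequality (Theorem \ref{MT2}) combined with Young's inequality only gives $\int_\Omega Ke^{nv}\,dx\le C\exp\bigl(c\|(-\Delta)^{\frac n4}v\|_{L^2}^2\bigr)$ — exponential, not subquadratic, so $J$ is neither bounded below nor coercive — and you then resort to constrained minimization on a ball plus a Lagrange-multiplier argument, openly flagging steps (2)--(4) as "the hard part" without actually closing them. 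The paper instead uses a different functional that makes the problem \emph{unconstrained}: it minimizes
$$F(u)=\Lambda\|(-\Delta)^{\frac n4}u\|_{L^2}^2-\log\Bigl(\int_\Omega K(e^{nu}-1)\,dx\Bigr)$$
on the open set $A=\{u\in\tilde H^{\frac n2,2}(\Omega):\int_\Omega K(e^{nu}-1)\,dx>0\}$, with $\Lambda>\tfrac{n^2p'}{4\alpha_{n,2}}$. The whole point is that the logarithm converts exactly the exponential bound you computed into the linear bound $\log\bigl(\int_\Omega K(e^{nu}-1)\bigr)\le C+\tfrac{n^2p'}{4\alpha_{n,2}}\|u\|^2$, so $F$ is coercive and bounded below; a minimizer exists by the usual direct method, and taking first variations (possible because $A$ is open) yields $(-\Delta)^{\frac n2}u_0=cKe^{nu_0}$ with $c>0$, and one absorbs $c$ into the additive constant exactly as you do with $\mu$. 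So the two approaches both end by shifting $u$ by $\tfrac1n\log(\cdot)$, but the paper's log trick (in the spirit of Mancini--Sandeep \cite{MS}) buys unconstrained coercivity and sidesteps all of the multiplier-sign, constraint-activity, and nontriviality issues that you identify but leave unresolved.

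Concretely, the one idea you are missing is simply to take the logarithm of the nonlinear term and build it into the functional rather than trying to control $J$ directly. After your Young-inequality computation gives $\int_\Omega Ke^{nv}\le C\exp\bigl(\tfrac{(np')^2}{4\alpha_{n,2}}\|v\|^2\bigr)$ (note: the relevant constant involves $\tfrac{(np')^2}{4\alpha_{n,2}}$ where you have to be careful to distinguish the exponent $p$ coming from $K\in L^p$ from the $p=2$ entering $\alpha_{n,2}$), taking $\log$ of both sides gives a bound that is linear in $\|v\|^2$, which is exactly what is needed for a functional of the form $\Lambda\|v\|^2-\log(\dots)$ to be coercive. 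As it stands, your constrained-minimization sketch is plausible but incomplete: you do not verify that the constrained minimizer is attained and nontrivial, nor that the Lagrange multiplier has the correct sign, and you would also need to justify the differentiability of $v\mapsto\int_\Omega Ke^{nv}$ on the relevant set — all of which the paper's formulation avoids.
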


\begin{proof} We only sketch the proof, since the details are similar to those in the proof of Theorem 3.1 in \cite{MS}. Consider the set
$$A:=\left\{u\in \tilde H^{\frac n2,2}(\Omega):\int_\Omega K (e^{nu}-1)dx>0\right\}\ne \emptyset,$$
and the functional
$$F(u)= \Lambda \|u\|^2 -\log \left(\int_\Omega K(e^{nu}-1)dx \right),\quad \|u\|^2:=\int_{\R^n}|(-\Delta)^\frac{n}{4} u|^2 dx,$$
where $\Lambda$ is any constant such that 
$\Lambda>\frac{n^2 p'}{4\alpha_{n,2}}$, and $\alpha_{n,2}$ is as in Theorem \ref{MT2}. Then, bounding
$$np'u \le \frac{\alpha_{n,2}u^2}{\|u\|^2}+ \frac{(\|u\| np')^2}{4\alpha_{n,2}},$$
using H\"older's inequality and Theorem \ref{MT2} with $p=2$ we get
\[
\begin{split}
\log \left(\int_\Omega K(e^{nu}-1)dx \right)&\le \log(\|K\|_{L^p(\Omega)})+\log(\|e^{nu}-1\|_{L^{p'}(\Omega)})\\
&\le C+\frac{1}{p'}\log\left(\int_\Omega e^{nup'}dx\right)\\
& \le C  +\frac{\log(c_{n,p}|\Omega|)}{p'}+\frac{n^2p'}{4\alpha_{n,2}}\|u\|^2,
\end{split}
\]
so that
$$F(u)\ge \left(\Lambda-\frac{n^2 p'}{4\alpha_{n,2}}\right)\|u\|^2-\tilde C,$$
i.e. $F$ is well-defined, bounded from below and coercive on $\tilde H^{\frac n2,2}(\Omega)$. A minimizing sequence $u_k$ is therefore bounded in $\tilde H^{\frac{n}{2},2}(\Omega)$, hence weakly converging to a minimizer $u_0\in A$. By taking first variations (the set $A$ is open) it follows that
$$(-\Delta)^\frac{n}{2} u_0= \frac{nKe^{nu_0}}{2\Lambda\int_\Omega K(e^{nu_0}-1)dx},$$
and up to adding a constant we find a solution to \eqref{eqK}.
\end{proof}

One can also prove existence results for more general semilinear equations, say
\begin{equation}\label{eqf}
(-\Delta)^\frac{n}{2} u=f(u)\quad \text{in }\Omega
\end{equation}
with $u\in \tilde H^{\frac{n}{2},2}(\Omega)$ and $f$ critical or subcritical, in the spirit for instance of the works of Adimurthi \cite{adi} and Iannizzotto and Squassina \cite{IS}, even in the case of the fractional $p$-Laplacian, but we will not do that. We only remark that in the case when $f$ is critical, e.g. $f(u)= ue^{u^2}$, a crucial ingredient is \eqref{stimaMT2c} for $p=a=2$, which is known only in dimension $1$ (by \cite{IMM}) and in even dimension (by \cite{ada}), hence the critical case in odd dimension $\ge 3$ is open. In arbitrary even dimension we mention the work of Lakkis \cite{lak}. The subcritical case should instead present no major difficulties since the functional corresponding to \eqref{eqf} should satisfy the Palais-Smale condition. For the regularity theory of nonlinear nonlocal equations we refer the reader e.g. to \cite{KMS1}, \cite{KMS2}, \cite{RS} and \cite{Sil}.

\section{Open questions}\label{sec:4}

An interesting question in whether fractional Adams-Moser-Trudinger inequalities hold for some domains of infinite measure, in the spirit of the results of Gianni Mancini-Sandeep \cite{MS} and Battaglia-Gabriele Mancini \cite{BMa} who in dimension $2$ and in the classical case $p=2$ proved that the inequality
\begin{equation}\label{stimaBG}
\sup_{u\in C^\infty_c(\Omega): \|\nabla u\|_{L^2}\le 1} \int_\Omega\left(e^{4\pi u^2}-1\right)dx<\infty
\end{equation}
holds if and only if $\lambda_1(\Omega)>0$, where $\lambda_1(\Omega)$ is the first eigenvalue of the Laplace operator with Dirichlet boundary conditions on $\Omega$ (open set in $\R^2$).

\medskip

Another natural question is whether one can replace the spaces $\tilde H^{\frac{n}{p},p}(\Omega)$ with the spaces $\tilde W^{\frac{n}{p},p}(\Omega)$, defined via a double integral (see e.g. \eqref{Wspace} below). This appears to be unknown already in dimension $1$, except when $p=2$ (see Proposition \ref{HW}).

\medskip

As already discussed in the introduction it would be interesting to prove the sharpness of the constants in the stronger form
$$\sup_{u\in \tilde H^{\frac{n}{p},p}(\Omega), \;\|(-\Delta)^{\frac{n}{2p}}u\|_{L^{p}(\Omega)}\leq 1}\int_{\Omega} f(|u|)e^{\alpha_{n,p} |u|^{p'}}dx =\infty
$$
for any function
$$f:[0,\infty)\to [0,\infty) \quad \text{with }\lim_{t\to\infty}f(t)=\infty,$$
as already known in the non-fractional case and in dimension $1$.



\appendix

\section{Appendix}

\subsection{Some useful results}

The following density result is due to Yu. V. Netrusov \cite{net}, see \cite[Thm. 10.1.1]{AH} .

\begin{trm}[Netrusov]\label{lemmadens} For $s>0$ and $p\in [1,\infty)$ the sets $C^\infty_c(\Omega)$ ($\Omega\subset\R^n$ open set) is dense in $\tilde H^{s,p}(\Omega)$.
\end{trm}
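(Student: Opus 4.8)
The statement to prove is Netrusov's density theorem: for $s>0$ and $p\in[1,\infty)$, $C^\infty_c(\Omega)$ is dense in $\tilde H^{s,p}(\Omega)$. Since this is cited verbatim from \cite{net} (and \cite[Thm.~10.1.1]{AH}), a full self-contained proof is outside the scope of the paper, but here is how I would approach establishing it. The plan is to pass through the theory of Bessel capacities and the characterization of $\tilde H^{s,p}(\Omega)$ as the space of functions in $H^{s,p}(\R^n)$ that vanish (in the sense of $(s,p)$-quasi-everywhere, via the Bessel capacity $C_{s,p}$) on $\R^n\setminus\Omega$. First I would reduce to the case of compactly supported elements: given $u\in\tilde H^{s,p}(\Omega)$, truncate by a fixed cutoff and use that multiplication by a smooth compactly supported function is bounded on $H^{s,p}$ (for $s$ possibly large one needs the Leibniz-type / paraproduct estimates for fractional Sobolev spaces) together with a routine cutoff-at-infinity argument to reduce to $u$ with compact support in $\R^n$, still vanishing quasi-everywhere outside $\Omega$.

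Next, the heart of the matter: approximating such a $u$ by functions in $C^\infty_c(\Omega)$. The standard strategy is a two-step mollification-plus-translation argument. One covers $\mathrm{supp}(u)\cap\overline\Omega$ and exploits that, because $u$ vanishes $(s,p)$-q.e. on the complement of $\Omega$, one can slightly shrink the effective support into $\Omega$ at the cost of an error controlled by the Bessel capacity of a thin collar, which tends to $0$. Concretely: for $\delta>0$ small, write $\Omega_\delta=\{x\in\Omega:\dist(x,\partial\Omega)>\delta\}$, and produce $u_\delta\in H^{s,p}$ supported in $\Omega_\delta$ with $\|u-u_\delta\|_{H^{s,p}}\to 0$; then mollify $u_\delta$ at scale $\ll\delta$ to land in $C^\infty_c(\Omega)$. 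The construction of $u_\delta$ is the delicate point — this is precisely where Netrusov's refinement enters, replacing the naive ``multiply by a cutoff of $\Omega_\delta$'' (which fails to converge for rough $\partial\Omega$) by an averaged/capacitary cutoff procedure, using that the $(s,p)$-capacity of the set where $u$ is ``large near $\partial\Omega$'' is negligible. One invokes that $u$, as an $(s,p)$-quasicontinuous representative, has $(s,p)$-quasieverywhere vanishing trace on $\partial\Omega$ from inside, and that Bessel capacities are outer and countably subadditive.

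The main obstacle is exactly this capacitary cutoff step: for a general open $\Omega$ with merely finite measure and possibly wild boundary, the cutoff $\eta_\delta$ of $\Omega_\delta$ need not satisfy $\|(1-\eta_\delta)u\|_{H^{s,p}}\to 0$ unless one chooses $\eta_\delta$ adapted to $u$ through its capacitary distribution function near $\partial\Omega$ — this requires the full machinery of \cite[Ch.~9--10]{AH} (quasicontinuity, Bessel-capacity strong-type estimates, and Netrusov's ``spectral synthesis''-type argument). For this reason I would, as the paper does, simply cite Netrusov's theorem rather than reproduce its proof, noting only that in the special cases actually used here ($s=\tfrac n2$, or $\Omega$ with the extension property) the argument simplifies considerably and the naive cutoff-and-mollify suffices.
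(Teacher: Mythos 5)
The paper itself provides no proof of this statement; it is cited directly to Netrusov \cite{net} and to \cite[Thm.~10.1.1]{AH}, and you have correctly adopted the same posture, acknowledging that a full proof would require the Bessel-capacity and spectral-synthesis machinery of Adams--Hedberg and offering only a high-level sketch. The sketch is broadly aligned with how the result is actually established: reduction to compactly supported $u$, a capacitary cutoff near $\partial\Omega$ exploiting that the quasicontinuous representative of $u$ vanishes quasi-everywhere on $\Omega^c$, and mollification. You also correctly isolate the delicate step (choosing the cutoff adapted to the capacitary behaviour of $u$ near $\partial\Omega$, since a naive cutoff of $\Omega_\delta=\{\dist(\cdot,\partial\Omega)>\delta\}$ need not converge for rough boundaries).

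The one point I would push back on is the closing claim that ``in the special cases actually used here ($s=\tfrac{n}{2}$, or $\Omega$ with the extension property) the argument simplifies considerably and the naive cutoff-and-mollify suffices.'' That mischaracterizes how the theorem is invoked. Theorem~\ref{lemmadens} is applied in Proposition~\ref{propgreen} (with $\sigma\in(0,2]$), in Proposition~\ref{trmexist}, and in the proof of Theorems~\ref{MT2}--\ref{MT3} (with $s=n/p$ for arbitrary $p\in(1,\infty)$, hence generically noninteger), and in every one of these instances $\Omega$ is only assumed to be open with $|\Omega|<\infty$ --- no extension property, segment property, or boundary regularity is imposed. It is precisely this generality, together with noninteger $s$, that forces reliance on Netrusov's theorem rather than on the classical cutoff-and-mollify identification $W^{m,p}_0(\Omega)=\tilde W^{m,p}(\Omega)$ valid for nice domains. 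A second, more minor point: the definition \eqref{defHsp} requires $u=0$ almost everywhere on $\R^n\setminus\Omega$, whereas your sketch works with $u=0$ $(s,p)$-quasi-everywhere on $\R^n\setminus\Omega$; the passage from the former to the latter is itself nontrivial and is where the quasicontinuity theory first enters, so it deserves at least an explicit mention rather than being folded silently into ``the characterization of $\tilde H^{s,p}(\Omega)$.''
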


The following way of computing the fractional Laplacian of a sufficiently regular function will be used. For a proof see e.g. \cite[Prop. 2.4]{Sil}.

\begin{prop}\label{lapint} For an open set $\Omega\subset\R^n$, let $\sigma\in (0,1)$ and $u\in L_\sigma(\R^n)\cap C^{0,\alpha}(\Omega)$ for some $\alpha\in  (\sigma,1]$, or $\sigma\in [1,2)$ and $u\in L_\sigma(\R^n)\cap C^{1,\alpha}(\Omega)$ for some $\alpha\in  (\sigma-1,1]$ . Then $((-\Delta)^\frac{\sigma}{2} u)|_\Omega\in C^0(\Omega)$ and
$$(-\Delta)^\frac{\sigma}{2} u(x)=C_{n,\sigma} P.V.\int_{\R^n} \frac{u(x)-u(y)}{|x-y|^{n+\sigma}}dy:= C_{n,\sigma}\lim_{\ve\to 0}\int_{\R^n\setminus B_\ve(x)}\frac{u(x)-u(y)}{|x-y|^{n+\sigma}}dy$$
for every $x\in \Omega$. This means that 
$$\langle (-\Delta)^\frac{\sigma}{2} u, \varphi\rangle =C_{n,\sigma}\int_{\R^n} \varphi(x)\,P.V.\int_{\R^n} \frac{u(x)-u(y)}{|x-y|^{n+\sigma}}dy\,dx,\quad \text{for every } \varphi\in C^\infty_c(\Omega).$$
\end{prop}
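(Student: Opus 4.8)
The plan is to establish the proposition in two essentially independent steps: (i) the principal–value integral converges at every $x\in\Omega$ and defines a continuous function $v$ on $\Omega$; (ii) this $v$ coincides, as a distribution on $\Omega$, with $(-\Delta)^\frac{\sigma}{2}u$ in the sense of \eqref{deffraclap}. For step (i), fix $x_0\in\Omega$ and $r>0$ with $\overline{B_{2r}(x_0)}\subset\Omega$, and work uniformly for $x\in B_r(x_0)$, splitting the integral at $\de B_r(x)$. The tail over $\R^n\setminus B_r(x)$ is absolutely convergent and uniformly bounded in $x$: the contribution of $u(x)$ is finite since $\int_{\R^n\setminus B_r(x)}|x-y|^{-n-\sigma}dy<\infty$, and the contribution of $u(y)$ splits into $\{|y|\le R\}$, where $|x-y|^{-n-\sigma}\le r^{-n-\sigma}$ and $u\in L^1_{\loc}$, and $\{|y|>R\}$, where for $R$ large $|x-y|\ge\tfrac12|y|$ and $u\in L_\sigma(\R^n)$ closes the estimate. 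The core over $B_r(x)$ is where the regularity hypotheses enter: if $\sigma\in(0,1)$ and $u\in C^{0,\alpha}(\Omega)$ with $\alpha>\sigma$, then $|u(x)-u(y)|\le[u]_{C^{0,\alpha}(B_{2r}(x_0))}|x-y|^{\alpha}$ makes the integrand $\aleq|x-y|^{\alpha-n-\sigma}$, absolutely integrable near $x$, so no principal value is needed; if $\sigma\in[1,2)$ and $u\in C^{1,\alpha}(\Omega)$ with $\alpha>\sigma-1$, one uses that $\int_{B_\rho(x)\setminus B_\ve(x)}\nabla u(x)\cdot\frac{x-y}{|x-y|^{n+\sigma}}dy=0$ for $0<\ve<\rho\le r$ by oddness of the kernel over a symmetric annulus, after which the first–order Taylor remainder is $\aleq|x-y|^{1+\alpha-n-\sigma}$, again absolutely integrable near $x$, so the truncated integrals converge as $\ve\to0$. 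Translating the integration variable makes the dominating functions independent of $x$, so dominated convergence gives continuity of $v$ at $x_0$, hence on all of $\Omega$.

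For step (ii), note that $C^\infty_c(\Omega)\subset\mathcal S(\R^n)$, so for $\varphi\in C^\infty_c(\Omega)$ one has $\langle(-\Delta)^\frac{\sigma}{2}u,\varphi\rangle=\int_{\R^n}u\,(-\Delta)^\frac{\sigma}{2}\varphi\,dx$ by \eqref{deffraclap}, and $(-\Delta)^\frac{\sigma}{2}\varphi$ is itself given by the same principal–value formula (a classical computation via the Fourier transform, the constant $C_{n,\sigma}$ being chosen precisely so that $C_{n,\sigma}\int_{\R^n}\frac{1-\cos(2\pi h\cdot\xi)}{|h|^{n+\sigma}}dh=|\xi|^{\sigma}$). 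I would then introduce the symmetric Gagliardo form
\[
\mathcal E(u,\varphi):=\frac{C_{n,\sigma}}{2}\int_{\R^n}\int_{\R^n}\frac{(u(y)-u(z))(\varphi(y)-\varphi(z))}{|y-z|^{n+\sigma}}\,dy\,dz,
\]
and check that this double integral converges absolutely: near the diagonal $\{|y-z|<r\}$ either $\varphi$ vanishes at both points, or both points lie in a fixed compact neighbourhood $K'\Subset\Omega$ of $\mathrm{supp}\,\varphi$ on which $u$ and $\varphi$ are Lipschitz, giving an integrand $\aleq|y-z|^{2-n-\sigma}$, integrable since $\sigma<2$; away from the diagonal the factor $\varphi(y)-\varphi(z)$ forces $y$ or $z$ into $\mathrm{supp}\,\varphi$, and $u\in L^1_{\loc}\cap L_\sigma(\R^n)$ controls the remaining variable. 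Given absolute convergence, Fubini applies and $\mathcal E(u,\varphi)=\lim_{\ve\to0}\frac{C_{n,\sigma}}{2}\iint_{|y-z|>\ve}(\cdots)$; the substitution $(y,z)\mapsto(z,y)$ on the symmetric set $\{|y-z|>\ve\}$ shows this truncated integral equals both $C_{n,\sigma}\int u(y)\int_{|y-z|>\ve}\frac{\varphi(y)-\varphi(z)}{|y-z|^{n+\sigma}}dz\,dy$ and $C_{n,\sigma}\int\varphi(y)\int_{|y-z|>\ve}\frac{u(y)-u(z)}{|y-z|^{n+\sigma}}dz\,dy$. Letting $\ve\to0$ in the first (dominated convergence against $C_\varphi\,|u(y)|/(1+|y|^{n+\sigma})$, plus the Schwartz formula above) gives $\mathcal E(u,\varphi)=\int u\,(-\Delta)^\frac{\sigma}{2}\varphi=\langle(-\Delta)^\frac{\sigma}{2}u,\varphi\rangle$; letting $\ve\to0$ in the second (dominated convergence uniform for $y\in\mathrm{supp}\,\varphi$, by the bounds of step (i)) gives $\mathcal E(u,\varphi)=\int\varphi\,v$. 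Hence $\langle(-\Delta)^\frac{\sigma}{2}u,\varphi\rangle=\int\varphi\,v$ for every $\varphi\in C^\infty_c(\Omega)$, i.e. $(-\Delta)^\frac{\sigma}{2}u=v$ in $\mathcal D'(\Omega)$, which together with the continuity of $v$ from step (i) proves the proposition.

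I expect the main obstacle to be the case $\sigma\in[1,2)$: there the pointwise formula only makes sense as a genuine principal value, and one must use the cancellation of the odd first–order term over symmetric annuli (the condition $\alpha>\sigma-1$ being exactly what this requires), while in step (ii) the analogous bookkeeping — splitting $\R^n\times\R^n$ so that the near–diagonal part uses only interior Lipschitz control near $\mathrm{supp}\,\varphi$ and the far part only membership in $L_\sigma(\R^n)$, and then justifying the two symmetrizations of the $\ve$–truncated integrals — is the most technical ingredient. Everything else reduces to routine applications of Fubini's theorem and dominated convergence.
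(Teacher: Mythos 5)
The paper does not prove Proposition~\ref{lapint}: it only points the reader to Silvestre \cite[Prop.\ 2.4]{Sil}. Your argument is a correct, self-contained reconstruction of essentially that standard proof, and the structure you chose --- (i) absolute convergence of the truncated integral together with the cancellation of the odd first-order term over symmetric annuli to get a well-defined continuous $v$, then (ii) the symmetric Gagliardo form and a Fubini/symmetrization argument to identify $v$ with $(-\Delta)^{\sigma/2}u$ tested against $\varphi\in C^\infty_c(\Omega)$ --- is the same route one finds in the literature. The split of the near-diagonal set of $\R^n\times\R^n$ into ``both points near $\mathrm{supp}\,\varphi$'' versus ``$\varphi(y)-\varphi(z)=0$'', and the far-diagonal estimate using $L_\sigma(\R^n)$, are exactly the right bookkeeping, and you correctly identify the uniform-in-$\ve$ dominations needed to pass to the limit in the two symmetrized expressions.

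One small imprecision: in step (ii) you say both $u$ and $\varphi$ are Lipschitz on a compact neighbourhood $K'$ of $\mathrm{supp}\,\varphi$, giving an integrand $\aleq|y-z|^{2-n-\sigma}$. When $\sigma\in(0,1)$, the hypothesis is only $u\in C^{0,\alpha}(\Omega)$ with $\alpha\in(\sigma,1]$, so $u$ need not be locally Lipschitz. The estimate still closes, but the correct bound is $|u(y)-u(z)|\,|\varphi(y)-\varphi(z)|\aleq |y-z|^{1+\alpha}$, giving an integrand $\aleq|y-z|^{1+\alpha-n-\sigma}$, locally integrable precisely because $\alpha>\sigma$. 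Worth phrasing this carefully since the hypothesis $\alpha>\sigma$ is used nowhere else in step (ii), and a reader might otherwise wonder why $C^{0,\alpha}$ with $\alpha>\sigma$ (rather than just $\alpha>0$) is needed for the near-diagonal absolute convergence when $\sigma\in(0,1)$. Apart from this, the proof is sound.
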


\subsection{Hilbert space techniques}

\begin{prop}\label{HW}
For $\sigma\in (0,2)$ we have $[u]_{W^{\frac \sigma 2,2}(\R^n)}<\infty$ if and only if $(-\Delta)^\frac{\sigma}{4}u\in L^2(\R^n)$, and in this case
\begin{equation}\label{Wspace}
[u]_{W^{\frac \sigma 2,2}(\R^n)}:=\left(\int_{\R^n}\int_{\R^n}\frac{(u(x)-u(y))^2}{|x-y|^{n+\sigma}}dxdy\right)^\frac12= C_{n,\sigma} \|(-\Delta)^\frac{\sigma}{4}u\|_{L^2(\R^n)}.
\end{equation}
In particular
$$H^{\frac \sigma 2,2}(\R^n)= W^{\frac \sigma 2,2}(\R^n):=\left\{u\in L^2(\R^n):[u]_{W^{\frac \sigma 2,2}(\R^n)}<\infty\right\}.$$
\end{prop}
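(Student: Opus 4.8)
The plan is to reduce the identity \eqref{Wspace} to a Plancherel computation on the Fourier side, and the ``if and only if'' to the resulting equality read in $[0,\infty]$. Throughout I work with $u\in L^2(\R^n)$ (the relevant case, since both $W^{\sigma/2,2}(\R^n)$ and $H^{\sigma/2,2}(\R^n)$ are by definition subspaces of $L^2(\R^n)$), so that $\hat u:=\mathcal{F}u\in L^2(\R^n)$. First I would record that, by the definition of $(-\Delta)^{\sigma/4}$ via \eqref{deffraclap} together with Plancherel,
$$\|(-\Delta)^{\sigma/4}u\|_{L^2(\R^n)}^2=\int_{\R^n}|\xi|^{\sigma}|\hat u(\xi)|^2\,d\xi \in [0,\infty],$$
so the right-hand side of \eqref{Wspace} is a fixed multiple of $\big(\int_{\R^n}|\xi|^{\sigma}|\hat u|^2\big)^{1/2}$.

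Next I would handle the Gagliardo seminorm. After the change of variables $z=x-y$,
$$[u]_{W^{\sigma/2,2}(\R^n)}^2=\int_{\R^n}\int_{\R^n}\frac{|u(x)-u(x-z)|^2}{|z|^{n+\sigma}}\,dx\,dz,$$
and for each fixed $z$ the translate $u(\cdot-z)$ has Fourier transform $e^{-iz\cdot\xi}\hat u(\xi)$, so Plancherel gives $\int_{\R^n}|u(x)-u(x-z)|^2\,dx=\int_{\R^n}|1-e^{-iz\cdot\xi}|^2|\hat u(\xi)|^2\,d\xi$. Since everything in sight is nonnegative, Tonelli's theorem allows me to interchange the order of integration, yielding
$$[u]_{W^{\sigma/2,2}(\R^n)}^2=\int_{\R^n}|\hat u(\xi)|^2\left(\int_{\R^n}\frac{|1-e^{-iz\cdot\xi}|^2}{|z|^{n+\sigma}}\,dz\right)d\xi.$$
The inner integral is computed by writing $|1-e^{-iz\cdot\xi}|^2=2(1-\cos(z\cdot\xi))$, applying a rotation sending $\xi/|\xi|$ to $e_1$, and then rescaling $z\mapsto z/|\xi|$; this shows it equals $c_{n,\sigma}|\xi|^\sigma$ with
$$c_{n,\sigma}:=\int_{\R^n}\frac{2(1-\cos z_1)}{|z|^{n+\sigma}}\,dz\in(0,\infty).$$
The finiteness of $c_{n,\sigma}$ is the single point that uses the restriction $\sigma\in(0,2)$: near the origin the integrand is $O(|z|^{2-n-\sigma})$, integrable since $\sigma<2$, and near infinity it is $O(|z|^{-n-\sigma})$, integrable since $\sigma>0$. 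Combining the displays gives $[u]_{W^{\sigma/2,2}(\R^n)}^2=c_{n,\sigma}\int_{\R^n}|\xi|^\sigma|\hat u(\xi)|^2\,d\xi$, which is precisely \eqref{Wspace} with $C_{n,\sigma}=c_{n,\sigma}^{1/2}$ (up to the normalization of the Fourier transform used to define $(-\Delta)^{\sigma/4}$).

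Finally, since both sides of the identity equal a fixed multiple of $\big(\int_{\R^n}|\xi|^\sigma|\hat u(\xi)|^2\big)^{1/2}$ in $[0,\infty]$, finiteness of one is equivalent to finiteness of the other, which is the asserted equivalence; and the set identity $H^{\sigma/2,2}(\R^n)=W^{\sigma/2,2}(\R^n)$ then follows at once by comparing \eqref{defHsp} (with $s=\sigma/2$, $p=2$) with the definition of $W^{\sigma/2,2}(\R^n)$. I do not anticipate a genuine obstacle: the only mildly technical steps are the use of Plancherel for the difference $u(\cdot)-u(\cdot-z)$ (immediate from the translation rule for $\mathcal{F}$) and the interchange of integrals (immediate from Tonelli, as all integrands are nonnegative), while the evaluation of $c_{n,\sigma}$ is a routine scaling argument whose only substantive content is the convergence at both ends guaranteed by $\sigma\in(0,2)$.
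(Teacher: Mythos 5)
Your argument is correct and is exactly the standard Fourier-side proof: the paper itself gives no proof but simply cites Proposition 4.4 of Di Nezza--Palatucci--Valdinoci, which carries out precisely this Plancherel computation (change of variables $z=x-y$, Plancherel for the difference $u(\cdot)-u(\cdot-z)$, Tonelli, and the scaling/rotation evaluation of the inner integral with convergence at $0$ and $\infty$ governed by $\sigma\in(0,2)$). So you have reproduced the argument of the cited source rather than found a new route; no gaps.
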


\begin{proof}
See e.g. Proposition 4.4 in \cite{DNPV}.
\end{proof}

\medskip

Define the bilinear form
$$\M{B}_\sigma(u,v)=\int_{\R^n}\int_{\R^n} \frac{(u(x)-u(y))(v(x)-v(y))}{|x-y|^{n+\sigma}}dxdy,\quad \text{for }u,v\in H^{\frac\sigma2,2}(\R^n),$$
where the double integral is well defined thanks to H\"older's inequality and Proposition \ref{HW}. 

The following simple and well-known existence result proves useful.

\begin{prop}\label{trmexist}
Let $\Omega\subset\R^n$ be open and have finite measure. Given $\sigma\in (0,2)$, $f\in L^2(\Omega)$ and $g:\R^n\to \R$ such that
\begin{equation}\label{intg}
\int_\Omega\int_{\R^n}\frac{(g(x)-g(y))^2}{|x-y|^{n+\sigma}}dxdy<\infty,
\end{equation}
there exists a unique function $u\in \tilde H^{\frac\sigma2,2}(\Omega)+g$ solving the problem
\begin{equation}\label{uvar}
\M{B}_\sigma(u,v)=\int_{\R^n} f v dx\quad \text{for every }v\in \tilde H^{\frac\sigma2,2}(\Omega).
\end{equation}
Moreover such $u$ satisfies $(-\Delta)^\frac{\sigma}{2} u=\frac{C_{n,\sigma}}{2}f$ in $\Omega$ in the sense of distributions, i.e.
\begin{equation}\label{uweak}
\int_{\R^n} u(-\Delta)^\frac{\sigma}{2} \varphi dx=\frac{C_{n,\sigma}}{2}\int_{\R^n} f\varphi dx\quad \text{for every }\varphi \in C^\infty_c(\Omega),
\end{equation}
where $C_{n,\sigma}$ is the constant in Proposition \ref{lapint}.

Conversely if $u\in \tilde H^{\frac \sigma2,2}(\Omega)+g$ satisfies \eqref{uweak}, then it also satisfies \eqref{uvar}.
\end{prop}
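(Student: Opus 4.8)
The plan is to read \eqref{uvar} as the weak Euler--Lagrange equation of a coercive symmetric bilinear form and apply Lax--Milgram (equivalently Riesz representation), and then to recover \eqref{uweak} by testing against $\varphi\in C^\infty_c(\Omega)$ and invoking the symmetrization identity that turns $\M{B}_\sigma$ into the pairing with $(-\Delta)^{\sigma/2}$.

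First I would show that $\M{B}_\sigma$, restricted to the Hilbert space $\tilde H^{\frac\sigma2,2}(\Omega)$, is an equivalent inner product. It is bilinear, symmetric and, by Proposition~\ref{HW}, $\M{B}_\sigma(w,w)^{1/2}$ is comparable to $\|(-\Delta)^{\sigma/4}w\|_{L^2(\R^n)}$; the missing ingredient is the fractional Poincaré inequality
\[
\|w\|_{L^2(\R^n)}^2\le C(n,\sigma)\,|\Omega|^{\sigma/n}\,\M{B}_\sigma(w,w),\qquad w\in\tilde H^{\frac\sigma2,2}(\Omega).
\]
This I would obtain by keeping, in the double integral defining $\M{B}_\sigma(w,w)$, only the region $\{x\in\Omega,\ y\notin\Omega\}$, where $w(y)=0$, so that $\M{B}_\sigma(w,w)\ge 2\int_\Omega w(x)^2\big(\int_{\R^n\setminus\Omega}|x-y|^{-n-\sigma}dy\big)dx$, and then bounding $\int_{\R^n\setminus\Omega}|x-y|^{-n-\sigma}dy$ from below by the same integral over the complement of the ball about $x$ having volume $|\Omega|$ — a ``bathtub'' computation, writing $|x-y|^{-n-\sigma}=(n+\sigma)\int_{|x-y|}^{\infty}t^{-n-\sigma-1}dt$ and using $|B_t(x)\setminus\Omega|\ge |B_1|\,t^n-|\Omega|$. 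Hence $\M{B}_\sigma^{1/2}$ is an equivalent norm and $(\tilde H^{\frac\sigma2,2}(\Omega),\M{B}_\sigma)$ is a Hilbert space.

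Next, writing $u=w+g$ with $w\in\tilde H^{\frac\sigma2,2}(\Omega)$ unknown, \eqref{uvar} becomes $\M{B}_\sigma(w,v)=\int_\Omega fv\,dx-\M{B}_\sigma(g,v)$ for all $v\in\tilde H^{\frac\sigma2,2}(\Omega)$. I would check that the right-hand side is a bounded linear functional of $v$: the term $\int_\Omega fv$ is $\le\|f\|_{L^2(\Omega)}\|v\|_{L^2}\le C\|f\|_{L^2}\M{B}_\sigma(v,v)^{1/2}$ by the Poincaré inequality, while for $\M{B}_\sigma(g,v)$ one first splits $\R^n\times\R^n$ according to whether the variables lie in $\Omega$, uses $v\equiv0$ outside $\Omega$ to see the integral converges absolutely, and gets by Cauchy--Schwarz $|\M{B}_\sigma(g,v)|\le 2\big(\int_\Omega\int_{\R^n}\frac{(g(x)-g(y))^2}{|x-y|^{n+\sigma}}dxdy\big)^{1/2}\M{B}_\sigma(v,v)^{1/2}$, finite by \eqref{intg}. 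Lax--Milgram then yields a unique such $w$, so $u=w+g$ solves \eqref{uvar}; uniqueness follows since the difference of two solutions lies in $\tilde H^{\frac\sigma2,2}(\Omega)$ and is $\M{B}_\sigma$-orthogonal to everything, hence zero by coercivity.

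Finally, for the equivalences with \eqref{uweak} I would use that $C^\infty_c(\Omega)\subset\tilde H^{\frac\sigma2,2}(\Omega)$, so testing \eqref{uvar} with $\varphi\in C^\infty_c(\Omega)$ gives $\M{B}_\sigma(u,\varphi)=\int_\Omega f\varphi\,dx$, and conversely; it then remains to show
\[
\langle(-\Delta)^{\frac\sigma2}u,\varphi\rangle=\frac{C_{n,\sigma}}{2}\,\M{B}_\sigma(u,\varphi),\qquad \varphi\in C^\infty_c(\Omega),
\]
after which \eqref{uweak} is equivalent to $\M{B}_\sigma(u,\varphi)=\int f\varphi$ on $C^\infty_c(\Omega)$, which extends to all $v\in\tilde H^{\frac\sigma2,2}(\Omega)$ by Netrusov's Theorem~\ref{lemmadens} since both sides are continuous in $v$ by the estimates above. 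To prove the displayed identity I would split $u=w+g$: for $w\in H^{\frac\sigma2,2}(\R^n)$ it is Plancherel's theorem plus the polarized form of Proposition~\ref{HW} (normalized to match Proposition~\ref{lapint}); for $g$, one notes that \eqref{intg} and the compact support of $\varphi$ make $\iint\frac{|g(x)-g(y)|\,|\varphi(x)-\varphi(y)|}{|x-y|^{n+\sigma}}$ absolutely convergent, writes $(-\Delta)^{\sigma/2}\varphi$ as the principal value of Proposition~\ref{lapint}, truncates to $|x-y|>\varepsilon$, symmetrizes via Fubini and $x\leftrightarrow y$, and lets $\varepsilon\to0$ — the usual integration by parts for $(-\Delta)^{\sigma/2}$ with one rough and one smooth argument. (Here $(-\Delta)^{\sigma/2}u$ is a tempered distribution, for which one tacitly assumes $g\in L_\sigma(\R^n)$; this holds in all the applications, where $g$ is $C^1$ and equals a multiple of $F_\sigma$ away from a ball.) The main obstacle is the fractional Poincaré inequality on a set of merely \emph{finite} measure — boundedness of $\Omega$ is not assumed — together with the rough-versus-smooth symmetrization; the remaining steps are Hilbert-space routine.
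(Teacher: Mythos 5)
Your proof is correct and follows the paper's own route: minimize the Dirichlet energy (Lax--Milgram) on $\tilde H^{\frac\sigma2,2}(\Omega)$ equipped with the equivalent norm $\M{B}_\sigma(\cdot,\cdot)^{1/2}$, bound the linear functional $v\mapsto\int_\Omega fv\,dx-\M{B}_\sigma(g,v)$ using \eqref{intg} and Cauchy--Schwarz, then symmetrize the kernel to identify $\frac{C_{n,\sigma}}{2}\M{B}_\sigma(u,\varphi)$ with $\int_{\R^n} u\,(-\Delta)^{\frac\sigma2}\varphi\,dx$ and invoke Netrusov density for the converse. The only genuine additions are your ``bathtub'' proof of the fractional Poincar\'e inequality for finite-measure $\Omega$ (which the paper merely asserts as ``easy to verify'') and the remark that \eqref{uweak} tacitly requires $u\in L_\sigma(\R^n)$ --- both useful details, but the strategy is the same.
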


\begin{proof} The first part follows by the abstract Dirichlet principle, see e.g. \cite[Theorem 3.2]{GM}. Indeed it is easy to verify that $\|v\|_{L^2(\Omega)}\le C[v]_{W^{\frac\sigma2,2}(\R^n)}$ for $v\in \tilde W^{\frac\sigma2,2}(\Omega)$, so that 
$$\|v\|_{H}^2:=\M{B}_\sigma(v,v)=[v]_{W^{\frac\sigma2,2}(\R^n)}^2$$
is an equivalent norm on the Hilbert space $H:=\tilde W^{\frac\sigma2,2}(\Omega)$. Also notice that the linear functional
$$L:H\to\R,\quad 
L(v):=\int_{\Omega}fvdx-\M{B}_\sigma(g,v)$$
is bounded, since by H\"older's inequality, the symmetry of $\mathcal{B}_\sigma$, and the vanishing of $v$ outside $\Omega$ we bound
\[
\begin{split}
|L(v)|&\le \|f\|_{L^2(\Omega)}\|v\|_{L^2(\Omega)} + 2\left(\int_\Omega\int_{\R^n}\frac{(g(x)-g(y))^2}{|x-y|^{n+\sigma}}dxdy\right)^\frac{1}{2}[v]_{W^{\frac\sigma2,2}(\R^n)}\\
&\le C(f,g)\|v\|_{H}.
\end{split}
\]
Then, by the Dirichlet principle the functional
$$\M{F}(v):=\frac{1}{2}\|v\|_H^2-L(v)$$
has a minimizer $\bar v$, and it follows at once that $u:=\bar v+g$ solves \eqref{uvar}.
To show that $u$ also solves \eqref{uweak} we notice that $C^\infty_c(\Omega)\subset \tilde H^{\frac\sigma2,2}(\Omega)$ and with Proposition \ref{lapint} we get
\[\begin{split}
\int_{\R^n} u(-\Delta)^\frac\sigma2 \varphi dx &=C_{n,\sigma}\int_{\R^n} u(x) P.V.\int_{\R^n}\frac{\varphi(x)-\varphi(y)}{|x-y|^{n+\sigma}}dydx\\
&=\frac{C_{n,\sigma}}{2}\M{B}_\sigma(u,\varphi)\\
&= \frac{C_{n,\sigma}}{2}\int_{\R^n} f\varphi dx,
\end{split}\]
where in the second identity we used the symmetry of $|x-y|^{n+\sigma}$.

The same computation shows that \eqref{uweak} implies
$$\M{B}_\sigma(u,\varphi)=\int_{\R^n} f\varphi dx\quad \text{for every }\varphi\in C^\infty_c(\Omega),$$
which in turn implies \eqref{uvar} thanks to the density result of Netrusov, Theorem \ref{lemmadens}.
\end{proof}


The following maximum principle is a special case of Theorem 4.1 in \cite{FKV}. We recall its proof because in our case it is very simple. 

\begin{prop}\label{maxprinc2} Let $\Omega\subset\R^n$ be open and have finite measure. Let $\sigma\in (0,2)$ and $u\in \tilde H^{\frac\sigma2,2}(\Omega)+g$ solve \eqref{uvar} for some $f\in L^2(\Omega)$ with $f\ge 0$ and $g$ satisfying \eqref{intg} and $g\ge 0$ in $\Omega^c$. Then $u\ge 0$.
\end{prop}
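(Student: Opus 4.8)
\textbf{Proof proposal for Proposition \ref{maxprinc2}.}

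The plan is to test the variational equation \eqref{uvar} against the negative part of $u$. Since $g\ge 0$ on $\Omega^c$ and $u\equiv g$ there, the negative part $u^-:=\max(-u,0)$ vanishes outside $\Omega$, so $u^-\in \tilde H^{\frac\sigma2,2}(\Omega)$ and is an admissible test function. First I would record the algebraic identity that for any real numbers $a,b$, writing $a=a^+-a^-$ and similarly for $b$,
$$(a-b)(a^--b^-)=-(a-b)^- ... $$
more precisely one checks the pointwise inequality
$$\big(u(x)-u(y)\big)\big(u^-(x)-u^-(y)\big)\le -\big(u^-(x)-u^-(y)\big)^2,$$
which holds because on the region where both $u(x),u(y)\ge 0$ both sides are $0$, where both are $\le 0$ it is an equality, and in the mixed case, say $u(x)\ge 0>u(y)$, the left side equals $-(u(x)-u(y))u^-(y)\le -u^-(y)^2=-(u^-(x)-u^-(y))^2$ since $u(x)-u(y)\ge -u(y)=u^-(y)\ge 0$ and $u^-(x)=0$.

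Granting this, I would plug $v=u^-$ into \eqref{uvar} and estimate
$$0\le \int_\Omega f\,u^-\,dx=\M{B}_\sigma(u,u^-)=\int_{\R^n}\int_{\R^n}\frac{(u(x)-u(y))(u^-(x)-u^-(y))}{|x-y|^{n+\sigma}}\,dx\,dy\le -[u^-]_{W^{\frac\sigma2,2}(\R^n)}^2\le 0,$$
using $f\ge 0$ and $u^-\ge 0$ for the first inequality and the pointwise bound above for the last. Hence $[u^-]_{W^{\frac\sigma2,2}(\R^n)}=0$, and since by Proposition \ref{HW} (together with the Poincar\'e-type estimate $\|v\|_{L^2(\Omega)}\le C[v]_{W^{\frac\sigma2,2}(\R^n)}$ on $\tilde W^{\frac\sigma2,2}(\Omega)$ noted in the proof of Proposition \ref{trmexist}) this is an equivalent norm on $\tilde H^{\frac\sigma2,2}(\Omega)$, we get $u^-\equiv 0$, i.e. $u\ge 0$ a.e.

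The only point requiring a little care — and the one I would flag as the main technical obstacle — is the justification that $u^-$ is a legitimate element of $\tilde H^{\frac\sigma2,2}(\Omega)$ and that the double integral defining $\M{B}_\sigma(u,u^-)$ converges absolutely, so that the manipulations above are valid; this follows from the fact that $v\mapsto v^-$ is bounded on $W^{\frac\sigma2,2}$ (the Gagliardo seminorm of $v^-$ is controlled by that of $v$, again by a pointwise inequality of the same flavour), that $u^-$ vanishes on $\Omega^c$, and Netrusov's density theorem \ref{lemmadens}. Everything else is the short computation above.
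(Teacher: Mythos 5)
Your proof is correct and follows essentially the same route as the paper's: the paper tests \eqref{uvar} with $v:=\min\{u,0\}=-u^-$, expands $u=u^++v$, and uses that the cross terms $u^+(x)v(x)$, $-u^+(x)v(y)$, $-u^+(y)v(x)$ are all $\ge 0$ to get $0\ge\M{B}_\sigma(u,v)\ge[v]^2_{W^{\sigma/2,2}}\ge 0$, which is exactly your chain $0\le\M{B}_\sigma(u,u^-)\le-[u^-]^2\le 0$ up to a global sign, and your pointwise inequality is the same algebraic fact packaged slightly differently. One small remark on the admissibility of $u^-$: since $u\in\tilde H^{\sigma/2,2}(\Omega)+g$ is not itself in $W^{\sigma/2,2}(\R^n)$ (the hypothesis \eqref{intg} on $g$ is only an integral over $\Omega\times\R^n$), the cleanest justification is not ``$v\mapsto v^-$ is bounded on $W^{\sigma/2,2}$'' but rather: $u^-$ vanishes on $\Omega^c$ because $u=g\ge 0$ there, so by the $1$-Lipschitz property of $t\mapsto t^-$ one has $[u^-]^2\le 2\int_\Omega\int_{\R^n}\frac{(u(x)-u(y))^2}{|x-y|^{n+\sigma}}\,dy\,dx<\infty$, which is finite precisely thanks to $[u-g]_{W^{\sigma/2,2}}<\infty$ and \eqref{intg}; Netrusov's theorem plays no role here, since $\tilde H^{\sigma/2,2}(\Omega)$ is defined in \eqref{defHsp} directly as the set of $H^{\sigma/2,2}(\R^n)$ functions vanishing off $\Omega$.
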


\begin{proof} From Proposition \ref{HW} it easily follows $v:=\min\{u,0\}\in \tilde H^{\frac\sigma2,2}(\Omega)$. Then, setting $u^+:=\max\{u,0\}$, according to \eqref{uvar} we have
\[\begin{split}
0\ge \M{B}_\sigma(u,v)&=\int_{\R^n{}}\int_{\R^n{}} \frac{(u^+(x)+v(x)-u^+(y)-v(y))(v(x)-v(y))}{|x-y|^{n+\sigma}}dxdy\\
&\ge\int_{\R^n}\int_{\R^n} \frac{(v(x)-v(y))^2}{|x-y|^{n+\sigma}}dxdy,
\end{split}\]
where we used that $u^+(x)v(x)=0$, $u^+(y)v(x)\le 0$ and $u^+(x)v(y)\le 0$ for $x,y\in\R^n$. It follows at once that $v\equiv 0$, hence $u\ge 0$.
\end{proof}


\begin{thebibliography}{99}
\begin{small}
\bibitem{Aba} \textsc{N. Abatangelo}, \emph{Large $s$-harmonic functions and boundary blow-up solutions for the fractional Laplacian}. Preprint, arXiv:1310.3193v2 (2013).

\bibitem{ada} \textsc{D. Adams,} \emph{A sharp inequality of J. Moser for higher order derivatives}, Ann. of Math. \textbf{128} (1988), 385-398.

\bibitem{AH} \textsc{D. Adams, L. I. Hedberg,} \emph{Function spaces and potential theory}, Springer, Berlin Heidelberg, 1996.

\bibitem{adi}\textsc{Adimurthi,}  \emph{Existence of positive solutions of the semilinear Dirichlet problem with critical growth for the n-Laplacian}, Ann. Scuola Norm. Sup. Pisa Cl. Sci. \textbf{(4)} 17 (1990), no. 3, 393-413.



\bibitem{Alb} \textsc{A. Alberico}, \emph{Moser Type Inequalities for Higher-Order Derivatives in Lorentz Spaces}, Potential Anal. \textbf{28} (2008), 389-400.

\bibitem{BMa} \textsc{L. Battaglia, G. Mancini}, \emph{Remarks on the Moser-Trudinger inequality}, Adv. Nonlinear Anal. \textbf{2} (2013), 389-425.








\bibitem{BM} \textsc{H. Br\'ezis, F. Merle,} \emph{Uniform estimates and blow-up behaviour for solutions of $-\Delta u=V(x)e^u$ in two dimensions}, Comm. Partial Diff. Eq. \textbf{16} (1991), 1223-1253.

\bibitem{BW} \textsc{H. Br\'ezis, S. Wainger}, \emph{A note on limiting cases of Sobolev embeddings and convolution inequalities}, Comm. Partial Diff. Eq. \textbf{5} (1980) 773-789.

\bibitem{CT} \textsc{D. Cassani, C. Tarsi}, \emph{A Moser-type inequality in Lorentz?Sobolev spaces for unbounded domains in $\R^n$}, Asymptotic Analysis \textbf{64} (2009), 29-51.

\bibitem{CC} \textsc{S.-Y. A. Chang, W. Chen,} \emph{A note on a class of higher order conformally covariant equations}, Discrete Contin. Dynam. Systems \textbf{7} (2001), no. 2, 275--281.

\bibitem{CK} \textsc{S. Chanillo, M. Kiessling}, Surfaces with prescribed Gauss curvature. Duke Math. J. \textbf{105} (2000), 309-353.



\bibitem{CS} \textsc{Z.-Q. Chen, R. Song,} \emph{Estimates on Green functions and Poisson kernels for symmetric stable processes}, Math. Ann.
\textbf{312} (1998), 465-501.


\bibitem{DLMR} \textsc{F. Da Lio, L. Martinazzi, T. Rivi\`ere}, \emph{Blow-up analysis for a non-local Liouville equation}, preprint (2015), arXiv:1503.08701.


\bibitem{DNPV}\textsc{E. Di Nezza, G. Palatucci, E. Valdinoci}, \emph{Hitchhiker's guide to the fractional Sobolev spaces}, Bull. Sci. math., Vol. \textbf{136} (2012), No. 5, 521--573.


\bibitem{FKV} \textsc{M. Felsinger, M. Kassmann, P. Voigt}, \emph{The Dirichlet problem for nonlocal operators}, preprint (2013), \texttt{arXiv:1309.5028}.

\bibitem{fon} \textsc{L. Fontana}, \emph{Sharp borderline Sobolev inequalities on compact Riemannian manifolds}, Comment. Math. Helv. \textbf{68} (1993), 415-454.

\bibitem{FM} \textsc{L. Fontana, C. Morpurgo}, \emph{Adams inequalities on measure spaces}, Adv. Math. \textbf{226} (2011), 5066-5119.

\bibitem{GM} \textsc{M. Giaquinta, L. Martinazzi}, \emph{An introduction to the regularity theory for elliptic systems, harmonic maps and minimal graphs}, 2nd Edition, Edizioni della Normale, Pisa 2012.

\bibitem{Gr0} \textsc{G. Grubb}, \emph{Fractional Laplacians on domains, a development of H\"ormander's theory of mu-transmission pseudodifferential operators}, preprint (2014), \texttt{arXiv:1310.0951}.

\bibitem{hyd} \textsc{A. Hyder}, \emph{Existence of entire solutions to a fractional Liouville equation in $\mathbb{R}^n$}, arXiv:1502.02685 (2015).

\bibitem{hyd2} \textsc{A. Hyder}, \emph{Structure of conformal metrics on $\mathbb{R}^n$ with constant $Q$-curvature }, arXiv:1504.07095 (2015).

\bibitem{H-M} \textsc{A. Hyder, L. Martinazzi}, \emph{Conformal metrics on $\mathbb{R}^{2m}$ with constant $Q$-curvature, prescribed volume and asymptotic behavior}, Discrete Contin. Dynam. Systems A \textbf{35} (2015), 283-299.


\bibitem{IS} \textsc{A. Iannizzotto, M. Squassina}, \emph{1/2-Laplacian problems with exponential nonlinearity},
J. Math. Anal. Appl. \textbf{414} (2014), 372-385.

\bibitem{IMM} \textsc{S. Iula, A. Maalaoui, L. Martinazzi}, \emph{A fractional Moser-Trudinger type inequalitiy in one dimension and its critical points}, arXvi:1504.04862 (2015).

\bibitem{JMMX} \textsc{T. Jin, A. Maalaoui, L. Martinazzi, J. Xiong}, \emph{Existence and asymptotics for solutions of a non-local Q-curvature equation in dimension three}, Calc. Var. {52} (2015) 469-488.

\bibitem{KSW} \textsc{H. Kozono, T. Sato, H. Wadade}, \emph{Upper bound of the best constant of a TrudingerÐMoser inequality and its application to a GagliardoÐNirenberg inequality}, Indiana Univ. Math. J. \textbf{55} (2006), 1951-1974.

\bibitem{KMS1} \textsc{T. Kuusi, G. Mingione, Y. Sire}, \emph{Nonlocal Equations with Measure Data}, Comm. Math. Phys. \textbf{337} (2015), 1317-1368.

\bibitem{KMS2} \textsc{T. Kuusi, G. Mingione, Y. Sire}, \emph{Nonlocal self-improving properties}, Analysis \& PDE \textbf{8} (2015), 57-114.

\bibitem{lak} \textsc{O. Lakkis}, \emph{Existence of solutions for a class of semilinear polyharmonic equations with critical exponential growth}, Adv. Differential Equations \textbf{4} (1999), 877-906.

\bibitem{LaL} \textsc{N. Lam, G. Lu}, \emph{A new approach to sharp Moser-Trudinger and Adams type inequalities: a rearrangement-free argument}, J. Differential Equation \textbf{255} (2013), 298-325.

\bibitem{LL} \textsc{E. H. Lieb, M. Loss}, Analysis. 
Second edition. Graduate Studies in Mathematics, 14.  American Mathematical Society, Providence, RI, 2001. ISBN: 0-8218-2783-9. 

\bibitem{lin} \textsc{C. S. Lin} \emph{A classification of solutions of conformally invariant fourth order equations in $\R{n}$}, Comm. Math. Helv \textbf{73} (1998), 206-231.

\bibitem{MMS} \textsc{A. Maalaoui, L. Martinazzi, A. Schikorra}, \emph{Blow-up behaviour of a fractional Adams-Moser-Trudinger type inequality in odd dimension}, arXiv:1504.00254.

\bibitem{MS} \textsc{G. Mancini, K. Sandeep}, \emph{Moser-Trudinger inequality on conformal discs}, Commun. Contemp. Math., \textbf{12} (2010), 1055-1068.

\bibitem{mar0} \textsc{L. Martinazzi},  \emph{Conformal metrics on $\mathbb{R}^{2m}$ with constant $Q$-curvature}, Rend. Lincei. Mat. Appl. \textbf{19} (2008), 279-292.


\bibitem{mar1} \textsc{L. Martinazzi,}  \emph{Classification of solutions to the higher order Liouville's equation on $\mathbb{R}^{2m}$}, Math. Z. \textbf{263} (2009), 307-329.


\bibitem{mos} \textsc{J. Moser,} \emph{A sharp form of an inequality by N. Trudinger}, Indiana Univ. Math. J. \textbf{20} (1970/71), 1077-1092.

\bibitem{net} \textsc{Yu. V. Netrusov,} \emph{Free interpolation in spaces of smooth functions} (Russian) Zap. Nauchn. Sem. S.-Peterburg. Otdel. Mat. Inst. Steklov. (POMI) \textbf{206} (1993), Issled. po Linein. Oper. i Teor. Funktsii. \textbf{21}, 107-118, 175; translation in J. Math. Sci. \textbf{80} (1996), 1941-1950

\bibitem{one} \textsc{R. O'Neil}, \emph{Convolution operators and $L(p,q)$ spaces}, Duke Math. J. \textbf{30} (1963), 129-142.
 
\bibitem{oza} \textsc{T. Ozawa}, \emph{On critical cases of SobolevÕs inequalities}, J. Funct. Anal. \textbf{127} (1995), 259-269.
 
\bibitem{poh} \textsc{S. I. Pohozaev}, \emph{The Sobolev embedding in the case
 $pl=n$}, Proc. Tech. Sci. Conf. on Adv. Sci. Research 1964-1965, Mathematics Section, Moskov. Energet. Inst. Moscow (1965), 158-170.
 


\bibitem{RS} \textsc{X. Ros-Oton, J. Serra} \emph{The Dirichlet problem for the fractional Laplacian: regularity up to the boundary}, J. Math. Pures Appl. \textbf{101} (2014), 275-302.

\bibitem{ruf} \textsc{B. Ruf,} \emph{A sharp Trudinger-Moser type inequality for unbounded domains in $\R^{2}$}, J. Funct. Analysis  \textbf{219} (2004), 340-367.

\bibitem{Sil} \textsc{L. Silvestre}, \emph{Regularity of the obstacle problem for a fractional power of the Laplace operator}. Comm. Pure Appl. Math. \textbf{60} (2007), no. 1, 67-112.


\bibitem{str} \textsc{R. S. Strichartz}, \emph{A note on Trudinger's extension of Sobolev's inequalities}, Indiana Univ. Math. J. \textbf{21} (1971/72), 841-842.


\bibitem{tru} \textsc{N. S. Trudinger,} \emph{On embedding into Orlicz spaces and some applications}, J. Math. Mech. \textbf{17} (1967), 473-483.

\bibitem{wei} \textsc{J. Wei} \emph{Asymptotic behavior of a nonlinear fourth order eigenvalue problem}, Comm. Partial Differential Equations \textbf{21} (1996), 1451-1467.

\bibitem{W-Y} \textsc{J. Wei, D. Ye:} \emph{Nonradial solutions for a conformally invariant fourth order equation in  $\mathbb{R}^{4}$}, Calc. Var. Partial Differential Equations  \textbf{32} (2008), 373-386.


\bibitem{XZ} \textsc{J. Xiao, Z. Zhai,} \emph{Fractional Sobolev, Moser-Trudinger, Morrey-Sobolev inequalities under Lorentz norms}, J. Math. Science \textbf{166} (2010), 357-376.

\bibitem{yud} \textsc{V. I. Yudovich}, \emph{On certain estimates connected with integral operators and solutions of elliptic equations}, Dokl. Akad. Nank. SSSR \textbf{138} (1961), 805-808.

\end{small}
\end{thebibliography}
\end{document}